\newtheorem{theorem}{Theorem}[section]
\newtheorem{claim}[theorem]{Claim}
\newtheorem{lemma}[theorem]{Lemma}
\newtheorem{fact}[theorem]{Fact}
\newtheorem{proposition}[theorem]{Proposition}
\newtheorem{question}[theorem]{Question}
\newtheorem{definition}{Definition}[section]
\theoremstyle{definition}
\newtheorem{remark}[theorem]{Remark}
\newtheorem{example}[theorem]{Example}
\def\Ind#1#2{#1\setbox0=\hbox{$#1x$}\kern\wd0\hbox to 0pt{\hss$#1\mid$\hss}
\lower.9\ht0\hbox to 0pt{\hss$#1\smile$\hss}\kern\wd0}
\def\ind{\mathop{\mathpalette\Ind{}}}
\def\notind#1#2{#1\setbox0=\hbox{$#1x$}\kern\wd0
\hbox to 0pt{\mathchardef\nn=12854\hss$#1\nn$\kern1.4\wd0\hss}
\hbox to 0pt{\hss$#1\mid$\hss}\lower.9\ht0 \hbox to 0pt{\hss$#1\smile$\hss}\kern\wd0}
\def\nind{\mathop{\mathpalette\notind{}}}
\patchcmd{\subsection}{-.5em}{.5em}{}{}
\title{On the properties $\mathrm{SOP}_{2^{n+1}+1}$}
\author{Scott Mutchnik}
\begin{document}

\begin{abstract}
We show that approximations of strict order can calibrate the fine structure of genericity. Particularly, we find exponential behavior within the $\mathrm{NSOP}_{n}$ hierarchy from model theory. Let $\ind^{\eth^{0}}$ denote forking-independence. Inductively, a formula \textit{$(n+1)$-$\eth$-divides} over $M$ if it divides by every $\ind^{\eth^{n}}$-Morley sequence over $M$, and \textit{$(n+1)$-$\eth$-forks} over $M$ if it implies a disjunction of formulas that $(n+1)$-$\eth$-divide over $M$; the associated independence relation over models is denoted $\ind^{\eth^{n+1}}$. We show that a theory where $\ind^{\eth^n}$ is symmetric or transitive must be $\mathrm{NSOP}_{2^{n+1}+1}$. We then show that, in the classical examples of $\mathrm{NSOP}_{2^{n+1}+1}$ theories, $\ind^{\eth^{n}}$ is symmetric and transitive; in particular, there are strictly $\mathrm{NSOP}_{2^{n+1}+1}$ theories where $\ind^{\eth^{n}}$ is symmetric and transitive, leaving open the question of whether symmetry or transitivity of $\ind^{\eth^{n}}$ is equivalent to $\mathrm{NSOP}_{2^{n+1}+1}$.

\end{abstract}
\maketitle

\section{Introduction}
This paper is on Shelah's \textit{strong order property} hierarchy, the properties $\mathrm{SOP}_{n}$ introduced in \cite{She95} and extended in \cite{Sh99}, \cite{DS04}. For $n \geq 3$, these are defined as follows:

\begin{definition}
A theory $T$ is $\mathrm{NSOP}_{n}$ (that is, does not have the \emph{n-strong order property}) if there is no definable relation $R(x_{1}, x_{2})$ with no $n$-cycles, but with tuples $\{a_{i}\}_{i \in \omega}$ with $\models R(a_{i}, a_{j})$ for $i <j$. Otherwise it is $\mathrm{SOP}_{n}$.
\end{definition}

For $1 \leq n \leq 4$, these properties have been developed to various degrees. In \cite{KR17}, Kaplan and Ramsey extend the theory of forking-independence in simple theories to \textit{Kim-independence} in \textit{$\mathrm{NSOP}_{1}$ theories}, by modifying the definition of dividing to require an invariant Morley sequence to witness the dividing. There, a characterization of $\mathrm{NSOP}_{1}$ is given in terms of symmetry for Kim-independence, using work of Chernikov and Ramsey in \cite{CR15}, and also in terms of the independence theorem for Kim-independence. Later work has continued the development of Kim-independence in $\mathrm{NSOP}_{1}$ theories; for example, see Kaplan and Ramsey (\cite{KR19}) for transitivity and witnessing; Kaplan, Ramsey and Shelah (\cite{KRS19}) for local character; Dobrowolski, Kim and Ramsey (\cite{DKR22}) and Chernikov, Kim and Ramsey (\cite{CKR20}) for independence over sets; Kruckman and Ramsey (\cite{KR18}) and Kruckman, Tran and Walsberg (\cite{KTW22}) for improvements upon the independence theorem; Kim (\cite{K21}) for canonical bases; and Kamsma (\cite{K22}), Dobrowolski and Kamsma (\cite{DKR22}) and Dmitrieva, Gallinaro and Kamsma (\cite{DGK23}) for extensions to positive logic, as well as the examples, by various authors, of $\mathrm{NSOP}_{1}$ theories in applied settings. See also Kim and Kim \cite{KK11}, Chernikov and Ramsey (\cite{CR15}), Ramsey (\cite{R19}), and Casanova and Kim (\cite{CK19}) for type-counting and combinatorial criteria for $\mathrm{SOP}_{1}$ and $\mathrm{SOP}_{2}$, and Ahn and Kim (\cite{AK20}) for connections of $\mathrm{SOP}_{1}$ and $\mathrm{SOP}_{2}$ to the antichain tree property further developed by Ahn, Kim and Lee in \cite{AK21}. 

The \textit{$\mathrm{SOP}_{2}$ theories} were characterized by Džamonja and Shelah (\cite{DS04}), Shelah and Usvyatsov (\cite{SD00}), and Malliaris and Shelah (\cite{MS17}) as (under mild set-theoretic assumptions) the maximal class in the order $\lhd^{*}$, related to Keisler's order, and in celebrated work of Malliaris and Shelah (\cite{MS16}), they were shown to be maximal in Keisler's order, in $\mathrm{ZFC}$. Then in \cite{NSOP2}, it was shown that a theory is $\mathrm{NSOP}_{2}$ if and only if it is $\mathrm{NSOP}_{1}$, bringing together Kim-independence and Keisler's order. It remains open whether all $\mathrm{NSOP}_{3}$ theories are $\mathrm{NSOP}_{2}$

Generalizing work of Evans and Wong in \cite{EW09}, showing the $\omega$-categorical Hrushovski constructions introduced in \cite{Ev02}, which have a natural notion of free amalgamation, are either simple or $\mathrm{SOP}_{3}$, and work of Conant in \cite{Co15} showing that all modular free amalgamation theories are simple or $\mathrm{SOP}_{3}$, the author in \cite{GFA} isolates two structural properties, with no known $\mathrm{NSOP}_{4}$ counterexamples, which generalize \cite{EW09} and \cite{Co15} and imply that a theory must be either $\mathrm{NSOP}_{1}$ or $\mathrm{SOP}_{3}$. As a consequence, all free amalgamation theories are $\mathrm{NSOP}_{1}$ or $\mathrm{SOP}_{3}$. Malliaris and Shelah (\cite{MS17}) show symmetric inconsistency for \textit{higher formulas} in $\mathrm{NSOP}_{3}$ theories, and Malliaris (\cite{Mal10b}) investigates the graph-theoretic depth of independence in relation to $\mathrm{SOP}_{3}$. In \cite{CKR23}, Ramsey, Kaplan and Simon show very recently that all binary $\mathrm{NSOP}_{3}$ theories are simple, by giving a theory of independence for a class of theories containing all binary theories. Until recently, no consequences of $\mathrm{NSOP}_{n}$ were known for the program of further extending the theory of Kim-independence in $\mathrm{NSOP}_{2} = \mathrm{NSOP}_{1}$ theories to $\mathrm{NSOP}_{n}$ for $n > 2$; then the author, in \cite{INDNSOP3}, shows that types in $\mathrm{NSOP}_{3}$ theories with internally $\mathrm{NSOP}_{1}$ structure satisfy Kim's lemma at an external level, as well as an independence theorem, and also shows that $\mathrm{NSOP}_{3}$ theories with symmetric Conant-independence satisfy an independence theorem for finitely satisfiable types with the same Morley sequences, related to that proposed for $\mathrm{NTP}_{2}$ theories by Simon (\cite{Sim20}).

Shelah, in \cite{She95}, gives results on universal models in $\mathrm{SOP}_{4}$ theories. Generalizing a line of argument from the literature originally used by Patel (\cite{Pat06}), Conant, in \cite{Co15} (where an historical overview of this argument can be found), shows free amalgamation theories are $\mathrm{NSOP}_{4}$. In \cite{GFA}, the author connects this result to a potential theory of independence in $\mathrm{NSOP}_{4}$ theories, defining the relation of \textit{Conant-independence}\footnote{This was originally introduced under a nonstandard definition in \cite{NSOP2}, to show $\mathrm{NSOP}_{2}$ theories are $\mathrm{NSOP}_{1}$.}:

\begin{definition}\label{5-conantindependence}
Let $M$ be a model and $\varphi(x, b)$ a formula. We say $\varphi(x, b)$ \emph{Conant-divides} over $M$ if for \emph{every} invariant Morley sequence $\{b_{i}\}_{i \in \omega}$ over $M$ starting with $b$, $\{\varphi(x, b)\}_{i \in \omega}$ is inconsistent. We say $\varphi(x, b)$ \emph{Conant-forks} over $M$ if and only if it implies a disjunction of formulas Conant-dividing over $M$. We say $a$ is \emph{Conant-independent} from $b$ over $M$, written $a \ind^{K^{*}}_{M}b$, if $\mathrm{tp}(a/Mb)$ does not contain any formulas Conant-forking over $M$.
\end{definition}

By Kim's lemma (Theorem 3.16 of \cite{KR17}), this coincides with Kim-independence in $\mathrm{NSOP}_{1}$ theories. Conant-independence gives a plausible theory of independence for $\mathrm{NSOP}_{4}$ theories:

\begin{fact}\label{5-conantnsop4}(Theorem 6.2, \cite{GFA})
Any theory where Conant-independence is symmetric is $\mathrm{NSOP}_{4}$, and there are strictly $\mathrm{NSOP}_{4}$ ($\mathrm{NSOP}_{4}$ and $\mathrm{SOP}_{3}$) theories where Conant-independence is symmetric. Thus $n = 4$ is the largest value of $n$ so that there are strictly $\mathrm{NSOP}_{n}$ theories where Conant-independence is symmetric.
\end{fact}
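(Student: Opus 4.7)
The plan is to prove the first statement by contrapositive: assume $T$ has $\mathrm{SOP}_{4}$ and exhibit $a, b$ over a small model $M$ witnessing failure of symmetry of $\ind^{K^{*}}$. Let $R(x, y)$ be an $\mathrm{SOP}_{4}$-witnessing formula, so there is a sequence $\{c_{i}\}_{i \in \omega}$ with $\models R(c_{i}, c_{j})$ for $i < j$ and no $R$-$4$-cycles. After Ramsey-extraction and compactness, I would work with an indiscernible bi-infinite chain $\{c_{i}\}_{i \in \ZZ}$, and fix a small model $M$ containing the left tail $\{c_{-n}\}_{n > 0}$, which rigidly anchors the position of later chain elements.

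The candidate asymmetric pair is $a = (c_{0}, c_{2})$, $b = c_{1}$, tested against the formula $\psi(y, (u, v)) := R(u, y) \wedge R(y, v) \in \mathrm{tp}(b/Ma)$. The central observation is that along an $M$-invariant Morley sequence $\{(u_{i}, v_{i})\}_{i \in \omega}$ in $\mathrm{tp}(a/M)$ inheriting the chain pattern (so that $R(u_{i}, u_{j})$, $R(v_{i}, v_{j})$, and $R(v_{i}, u_{j})$ all hold whenever $i < j$), any common realization $y$ of $\psi(y, (u_{i}, v_{i}))$ at three indices $i_{1} < i_{2} < i_{3}$ would close up the $R$-$4$-cycle $y \to v_{i_{1}} \to v_{i_{2}} \to u_{i_{3}} \to y$, contradicting no-$4$-cycles. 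The main obstacle is upgrading this from inconsistency along one Morley sequence (Kim-dividing) to inconsistency along \emph{every} $M$-invariant Morley sequence (Conant-dividing); this requires choosing $M$ rich enough that $M$-indiscernibility together with the structure of $\mathrm{tp}(a/M)$ pins the $R$-pattern between distinct Morley-sequence indices to the chain-inherited one. The reverse direction, $a \ind^{K^{*}}_{M} b$, is then established by exhibiting, for any candidate Conant-dividing formula in $\mathrm{tp}(a/Mb)$, an explicit $M$-invariant Morley sequence in $\mathrm{tp}(b/M)$—namely, the canonical chain-Morley sequence—along which consistent realizations are produced by placing $a$ in sufficiently large chain-gaps.

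For the second statement, the existence of strictly $\mathrm{NSOP}_{4}$ theories with symmetric Conant-independence is exhibited by the modular free amalgamation theories of Conant \cite{Co15}: these are all $\mathrm{NSOP}_{4}$ and satisfy a simple-or-$\mathrm{SOP}_{3}$ dichotomy, so strictly $\mathrm{SOP}_{3} \cap \mathrm{NSOP}_{4}$ examples exist; in each such theory, Conant-independence coincides with the canonical free-amalgamation independence relation, which is symmetric by construction. The maximality of $n = 4$ is then immediate: by the first statement, any theory with symmetric Conant-independence is $\mathrm{NSOP}_{4}$ and hence not $\mathrm{SOP}_{4}$, so no strictly $\mathrm{NSOP}_{n}$ theory for $n \geq 5$—all of which are $\mathrm{SOP}_{4}$—can have symmetric Conant-independence.
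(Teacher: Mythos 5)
This statement is cited as \textbf{Fact} \ref{5-conantnsop4}, attributed to Theorem 6.2 of \cite{GFA}; the present paper does not reprove it, so there is no internal proof to compare against directly. The closest internal analogue is the proof of Theorem \ref{5-symm}, which gives the $\ind^{\eth^{n}}$/$\mathrm{NSOP}_{2^{n+1}+1}$ generalization via Lemma \ref{5-cycledividing}, and that comparison is instructive.

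Your construction is directionally similar to Theorem \ref{5-symm} (an $R$-chain, a candidate asymmetric pair, a cycle argument to show dividing), but it has a genuine gap at the point you yourself flag as ``the main obstacle.'' To show that $\psi(y,(u,v))$ \emph{Conant}-divides you need inconsistency along \emph{every} $M$-invariant Morley sequence in $\operatorname{tp}(a/M)$, and anchoring $M$ to the left tail $\{c_{-n}\}_{n>0}$ alone does not pin the $R$-pattern between distinct indices of an arbitrary invariant Morley sequence to the ``chain-inherited'' one. An invariant type over $M$ extending $\operatorname{tp}(a/M)$ is free to decide, e.g., $\neg R(v_i,u_j)$ for $i<j$, in which case your $4$-cycle $y\to v_{i_1}\to v_{i_2}\to u_{i_3}\to y$ does not close. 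The paper's analogous argument avoids this by setting $M=\operatorname{dcl}_{\mathrm{Sk}}(\{c^*_i\}_{i\in\ZZ}\cup\{c^*_{2\ZZ+i}\}_{i\in\ZZ})$, i.e.\ both a left \emph{and} a right tail, and the ``easy'' direction of asymmetry is established not by exhibiting a favorable invariant Morley sequence but by exhibiting a \emph{finitely satisfiable} (coheir) one, using the right tail in $M$ and Fact \ref{5-coheirs}; that step genuinely needs finite satisfiability, not just invariance, because non-Conant-dividing along one sequence does not by itself give non-Conant-forking. Your sketch conflates these. Additionally, the hard direction in the paper's proof is not obtained by pinning down $R$-patterns at all but via the recursive Lemma \ref{5-cycledividing}, a structurally different device.

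For the existence clause, citing Conant's free amalgamation theories is the right instinct (the $K_n$-free graph is $\mathrm{SOP}_3\cap\mathrm{NSOP}_4$), but the claim that ``Conant-independence coincides with the canonical free-amalgamation independence relation, which is symmetric by construction'' is exactly the substantive theorem that \cite{GFA} proves; it cannot be waved through as ``by construction.'' The final paragraph (maximality of $n=4$) is fine modulo the first two parts.
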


In \cite{GFA}, the author characterizes Conant-independence in most of the known examples of $\mathrm{NSOP}_{4}$ theories, where it is symmetric. This leaves open the question of whether Conant-independence is symmetric in all $\mathrm{NSOP}_{4}$ theories, giving a full theory of independence for the class $\mathrm{NSOP}_{4}$ theories.

However, to our knowledge, other than some examples (\cite{She95}, \cite{CW04}; see also \cite{CT16}), little has been known about the properties $\mathrm{SOP}_{n}$ for $n \geq 5$.

\:

The main result of this paper will be to generalize the interactions betwen $\mathrm{SOP}_{4}$ and Conant-independence to the higher levels of the $\mathrm{SOP}_{n}$ hierarchy. As with Conant-independence, we will move from the forking-independence ``at a generic scale" considered by Kruckman and Ramsey's work on Kim-independence in (\cite{KR17}, where the phrase is coined), to forking-independence at a \textit{maximally generic scale}, grounding our notion of independence in dividing with respect to \textit{every} Morley sequence of a certain kind, rather than just some Morley sequence. There is precedent for this kind of definition in the ``strong Kim-dividing" of Kaplan, Ramsey and Shelah in \cite{KRS19}, defined in the context of ``dual local character" in $\mathrm{NSOP}_{1}$ theories and grounding the defintion of Conant-independence.

We will also turn our attention to the \textit{fine structure} of the genericity in the sequences that witness dividing, taking into account the variation between different classes of Morley sequences. For Kim-independence in $\mathrm{NSOP}_{1}$ theories, this fine structure is submerged: by Corollary 5.4 of \cite{KR19}, Kim-independence in $\mathrm{NSOP}_{1}$ theories remains the same when one replaces invariant Morley sequences in the genericity with Kim-independence itself. In the examples of $\mathrm{NSOP}_{4}$ theories where Conant-independence has been characterized, it can also be seen that Conant-independence remains the same when one replaces invariant Morley sequences in the definition (Definition \ref{5-conantindependence}) with Conant-nonforking Morley sequences; see remarks at the end of Section 2 of this paper. However, in, say, strictly $\mathrm{NSOP}_{5}$ theories, Conant-independence cannot be symmetric, but a symmetric notion of independence can be obtained in some examples by replacing the invariant Morley sequences with nonforking Morley sequences. More generally, we can iteratively obtain different levels of genericity, the independence relations $\ind^{\eth^{n}}$ defined in Definition \ref{5-eth}. The main result of this paper will be to show, within the interaction between the layers $\ind^{\eth^{n}}$ of genericity and the approximations $\mathrm{SOP}_{k}$ of strict order, the resonance of the exponential function $2^{n+1}+1$.

We show:

\begin{theorem}
     Let $n \geq 1$. If $\ind^{\eth^{n}}$ is symmetric in the theory $T$, then $T$ is $\mathrm{NSOP}_{2^{n+1}+1}$. Moreover, there exists an $\mathrm{SOP}_{2^{n+1}}$ theory in which $\ind^{\eth^{n}}$ is symmetric. So $k = 2^{n+1}+1$ is the largest value of $k$ so that there is an $\mathrm{NSOP}_{k}$ but $\mathrm{SOP}_{k-1}$ theory where $\ind^{\eth^{n}}$ is symmetric.
\end{theorem}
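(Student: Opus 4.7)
My plan is to establish the forward implication by contrapositive: assuming $T$ has $\mathrm{SOP}_{2^{n+1}+1}$, I aim to construct an asymmetric pair for $\ind^{\eth^{n}}$ over some model. I proceed by induction on $n$. Although the theorem is stated for $n \geq 1$, it is convenient to include the degenerate case $n = 0$: symmetry of forking $\ind^{\eth^{0}}$ over models is equivalent to simplicity, hence implies $\mathrm{NSOP}_{1}$ and a fortiori $\mathrm{NSOP}_{3}$. The first substantive case is $n = 1$, where the conclusion $\mathrm{NSOP}_{5}$ lies strictly beyond Fact~\ref{5-conantnsop4}, because $\ind^{\eth^{1}}$ (tested against all forking-nonforking Morley sequences) is in general weaker than Conant-independence (tested only against invariant Morley sequences).

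For the inductive step, I assume the theorem at level $n$ and suppose $T$ is $\mathrm{SOP}_{2^{n+2}+1}$ with witness $R(x,y)$ having no $(2^{n+2}+1)$-cycles but an infinite indiscernible $R$-chain. Following the strategy behind Fact~\ref{5-conantnsop4} but applied one level higher, I will build a binary tree of Morley sequences: the chain seeds the $2^{n+1}$ leaves, and at height $k$ I glue two subtrees using a fresh $\ind^{\eth^{k}}$-Morley sequence, producing a single tree of height $n+1$. The total of $2^{n+1}$ leaves accounts for the exponent in $2^{n+1}+1$, and the extra $+1$ arises from the closing edge of the target cycle. I will then argue that if $\ind^{\eth^{n+1}}$ were symmetric, the configuration at the root of this tree would be realized in a way compatible with $R$, while the no-$(2^{n+2}+1)$-cycle hypothesis rules out precisely that realization. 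The final descent uses the inductive hypothesis at level $n$ to extract an asymmetry of $\ind^{\eth^{n}}$ from the contradictory configuration, which by the inductive definition of $(n+1)$-$\eth$-dividing propagates to an asymmetry of $\ind^{\eth^{n+1}}$.

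The main technical obstacle is bookkeeping the independence data as the tree is assembled. At each level $k$ the Morley sequence must be $\ind^{\eth^{k}}$-Morley—so that Kim's-lemma-style witnessing for $(k{+}1)$-$\eth$-dividing becomes available—and must also remain coherent with the $R$-configuration living at the leaves, so that each cycle-free condition propagates in the correct arithmetic form. This calls for the extension, finite-satisfiability, and Erd\H{o}s--Rado-style indiscernible-extraction tools of the ``strong Kim-dividing''/``dual local character'' framework of \cite{KRS19} underlying $\eth$-independence. A subsidiary point is that symmetry of $\ind^{\eth^{n+1}}$ must propagate downward to give the relevant existence/extension axioms for $\ind^{\eth^{k}}$-Morley sequences at lower $k$; this should follow by unwinding the inductive $\eth$-definition together with a Kim's-lemma-type argument at each stage.

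For the tightness assertion, I rely on the analysis promised by the abstract: in the classical strictly $\mathrm{NSOP}_{2^{n+1}+1}$ theories studied in later sections of the paper (in the spirit of \cite{She95} and related constructions), $\ind^{\eth^{n}}$ is verified directly to be symmetric (in fact symmetric and transitive). Since ``strictly $\mathrm{NSOP}_{2^{n+1}+1}$'' entails $\mathrm{SOP}_{2^{n+1}}$, this supplies the required example and shows that $k = 2^{n+1}+1$ is optimal.
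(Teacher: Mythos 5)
Your high-level strategy—argue by contrapositive, exploit a long $R$-chain, and use the arithmetic $2^{n+1}+1 = (\text{paths glued at }n+1\text{ nested levels})+1$—is the right one, and your tree picture is morally present in the paper's key Lemma~\ref{5-cycledividing} (the nested formula $R^{k}$ encodes exactly the binary recursion you describe). But the overall architecture of your argument has a genuine gap that the paper avoids by taking a different route.

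The decisive problem is the ``final descent.'' You propose to use the inductive hypothesis at level $n$ to extract an asymmetry of $\ind^{\eth^{n}}$ and then to ``propagate'' this to an asymmetry of $\ind^{\eth^{n+1}}$ via the inductive definition of $(n+1)$-$\eth$-dividing. There is no such propagation mechanism: the paper itself records that even the question of whether symmetry of $\ind^{\eth^{n}}$ implies symmetry of $\ind^{\eth^{n+1}}$ is open, which is exactly the contrapositive of what your descent would require. The definition of $(n+1)$-$\eth$-dividing quantifies over all $\ind^{\eth^{n}}$-Morley sequences, but an asymmetric pair for $\ind^{\eth^{n}}$ gives you no handle on that universal quantifier. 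Relatedly, your ``subsidiary point'' that symmetry of $\ind^{\eth^{n+1}}$ should propagate downward to supply existence/extension for $\ind^{\eth^{k}}$-Morley sequences at lower $k$ is also not available: the paper never uses symmetry at any level during the construction, and no downward implication of this kind is proved or known.

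The paper sidesteps both issues by not inducting on the theorem at all. It performs a single, direct construction inside a fixed $T$ with $\mathrm{SOP}_{2^{n+1}+1}$: after Skolemizing and taking a long indiscernible $R$-chain, it proves by a \emph{formula-level} induction on $k$ (not on the theorem index $n$) that the nested path formula $R^{k}(\bar y, c_{0},\ldots,c_{2^{k}})$ $k$-$\eth$-divides over the Skolem hull $M$, using the no-$(2^{n+1}+1)$-cycle hypothesis only at the base case $k=0$ and the definition of $\ind^{\eth^{k}}$-Morley sequence at the inductive step. This yields one explicit direction of dependence: $\{c_{2i-1}\}\nind^{\eth^{n}}_{M}\{c_{2i}\}$. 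The other direction of the asymmetry is then obtained \emph{positively}, not by contradiction: for $n=1$ via a coheir Morley sequence (Fact~\ref{5-coheirs}), and for $n\geq 2$ via a minimality-of-$m$ trick combined with finite satisfiability and the fact that $n$-$\eth$-forking equals $n$-$\eth$-dividing (Proposition~\ref{5-fd}). Your proposal does not register this $n=1$ versus $n\geq 2$ bifurcation, which matters because $1$-$\eth$-forking is not known to coincide with $1$-$\eth$-dividing, so Fact~\ref{5-coheirs} has to stand in for Proposition~\ref{5-fd} there. The tightness assertion you defer to the examples section is handled correctly; the gap is entirely in the forward implication.
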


Similar results are proven for (left and right) transitivity. As with Conant-independence, this leaves open the question of whether $\ind^{\eth^{n}}$ is symmetric in all $\mathrm{NSOP}_{2^{n+1}+1}$ theories, which would give a full theory of independence throughout the strict order hierarchy.

In Section 2, we define $\ind^{\eth^{n}}$, show some basic properties necessary for our main result, and give some connections with stability motivating the possibility that symmetry for $\ind^{\eth^{n}}$ forms a hierarchy.

In section 3, we characterize $\ind^{\eth^n}$ in the classical examples of $\mathrm{NSOP}_{2^{n+1}+1}$ theories, including the generic directed graphs without short directed cycles and undirected graphs without short odd cycles of \cite{She94}, and the free roots of the complete graph of \cite{CW04}, developed in \cite{CT16}. Though $\ind^{\eth^n}= \ind^{a}$ will be trivial (and thus symmetric) in these classical examples, giving us the existence result of our main theorem, it is promising for the full characterization, Question \ref{5-openquestion}, that the cycle-free examples and the free roots of the complete graph have $\ind^{\eth^n}= \ind^{a}$ for different reasons. In the cycle-free examples, successive approximations of forking-independence tend towards larger graph-theoretic distances, while in the free roots of the complete graph, distances in successive approximations of forking-independence tend \textit{away} from the extremes.

In section 4, we show that if $\ind^{\eth^{n}}$ is symmetric in the theory $T$, then $T$ is $\mathrm{NSOP}_{2^{n+1}+1}$, completing our main result. We pose the converse as an open question.

\section{Definitions and basic properties}

We recall the defintion of \textit{forking-independence}:

\begin{definition}
    A formula $\varphi(x, b)$ \emph{divides} over a model $M$ if there is an $M$-indiscernible sequence $\{b_{i}\}_{i < \omega}$ with $b_0 = b$ and $\{\varphi(x, b_{i})\}_{i < \omega}$ inconsistent. A formula $\varphi(x, b)$ \emph{forks} over a model $M$ if there are $\varphi_{i}(x, b_{i})$ dividing over $M$ so that $\models \varphi(x, b) \rightarrow \bigvee_{i=1}^{n} \varphi_{i}(x, b_{i})$. We say that $a$ is \emph{forking-independent} from $b$ over $M$, denoted $a \ind_{A}^{f} b$, if $\mathrm{tp}(a/Mb)$ contains no formulas forking over $M$.
\end{definition}

We define the relations $\ind^{\eth^n}$, in analogy with the \textit{Conant-independence} of \cite{GFA}. To give the definition, we need to generalize the notion of Morley sequence to any relation between sets over a model.

\begin{definition}
    Let $\ind$ be a relation between sets over a model. An \textit{$\ind$-Morley sequence} over $M$ is an $M$-indiscernible sequence $\{b_{i}\}_{i <\omega}$ with $b_{i} \ind_{M} b_{0} \ldots b_{i-1}$ for $i < \omega$.
\end{definition}

Let $a\ind^{u}_{M} b$ denote that $\mathrm{tp}(a/Mb)$ is $M$-finitely satisfiable; as elsewhere in the literature, a \textit{finiteley satisfiable} or \textit{coheir Morley sequence} will be a $\ind^{u}$-Morley sequence.

\begin{definition} \label{5-eth}
(1) Let $\ind^{\eth^{0}}$, \emph{$0$-$\eth$-independence}, denote forking-independence over a model $M$; a formula $0$-$\eth$-divides ($0$-$\eth$-forks) over $M$ if it divides (forks) over $M$.

Inductively,

(2a) A formula $\varphi(x, b)$ \textit{$(n+1)$-$\eth$-divides} over a model $M$ if, for any $\ind^{\eth^n}$-Morley sequence $\{b_{i}\}_{i <\omega}$ with $b_{0} = b$, $\{\varphi(x, b_{i})\}_{i < \omega}$ is inconsistent.\footnote{It is not immediate that this defintion is independent of adding or removing unusued parameters in $b$, though this is corrected by the definition of $n+1$-$\eth$-forking. We fix the convention that a formula only has finitely many parameters. Fixing this convention, it will follow from the results of this section that $n$-$\eth$-dividing of a formula $\varphi(x, b)$ is independent of adding or removing unused parameters in $b$ for $n > 1$; this is not known for $n =1$.}

(2b)  A formula $\varphi(x, b)$ \emph{$(n+1)$-$\eth$-forks} over a model $M$ if there are $\varphi_{i}(x, b_{i})$ $(n+1)$-$\eth$-dividing over $M$ so that $\models \varphi(x, b) \rightarrow \bigvee_{i=1}^{n} \varphi_{i}(x, b_{i})$.

(2c) We say that $a$ is \emph{$(n+1)$-$\eth$-independent} from $b$ over $M$, denoted $a \ind_{M}^{\eth^{n+1}} b$, if $\mathrm{tp}(a/Mb)$ contains no formulas $(n+1)$-$\eth$-forking over $M$.

\end{definition}

It will be useful for our main results to show that $n$-$\eth$-forking coincides with $n$-$\eth$-dividing in general, for $n > 1$; this is not known to be the case for $n=1$, so this case will need to be handled separately in proving our main results.

\begin{lemma}\label{5-ext}
(1) The relation $\ind^{\eth^n}$ has right-extension for $n \geq 0$: if $a \ind_{M}^{\eth^n} b$ then for any $c$ there is $a \equiv_{Mb} a'$ with $a' \ind_{M}^{\eth^n} bc$.

(2) The relation $\ind^{\eth^n}$ has left-extension for $n \geq 1$: if $a \ind_{M}^{\eth^n} b$ then for any $c$ there is $c \equiv_{Ma} c'$ with $ac' \ind_{M}^{\eth^n} b$.

\end{lemma}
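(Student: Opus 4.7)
The plan for (1) is a standard compactness-extension argument. Given $a \ind_{M}^{\eth^{n}} b$ and any $c$, I want to show the partial type
\[ \mathrm{tp}(a/Mb) \cup \{\neg \varphi(x, bc) : \varphi(x, bc) \text{ is } n\text{-}\eth\text{-forking over } M\} \]
is consistent. If not, compactness produces $\psi(x, b) \in \mathrm{tp}(a/Mb)$ and finitely many $n$-$\eth$-forking formulas $\varphi_{i}(x, bc)$ with $\psi(x, b) \to \bigvee_{i} \varphi_{i}(x, bc)$. Since a disjunction of $n$-$\eth$-forking formulas is again $n$-$\eth$-forking (unpack each $\varphi_{i}$ as implying a disjunction of $n$-$\eth$-dividing formulas, then combine), $\psi(x, b)$ itself $n$-$\eth$-forks over $M$, contradicting the hypothesis.

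For (2) my plan is a contradiction argument via a ``transport'' trick. Suppose no $c' \equiv_{Ma} c$ satisfies $ac' \ind_{M}^{\eth^{n}} b$. Compactness furnishes $\eta(y, a) \in \mathrm{tp}(c/Ma)$ together with $n$-$\eth$-forking formulas $\varphi_{i}(x_{1}, x_{2}, b)$ satisfying $\eta(y, a) \to \bigvee_{i} \varphi_{i}(a, y, b)$. Unfolding each $\varphi_{i} \to \bigvee_{l} \chi_{il}(x_{1}, x_{2}, b'_{il})$ into $n$-$\eth$-dividing pieces and letting $B$ collect $b$ together with all the $b'_{il}$, I consider
\[ \theta(x_{1}, b) \;:=\; \bigl(\exists y\, \eta(y, x_{1})\bigr) \,\wedge\, \forall y \bigl[\eta(y, x_{1}) \to \bigvee_{i} \varphi_{i}(x_{1}, y, b)\bigr], \]
and the analogous formula $\theta'(x_{1}, B)$ obtained by replacing $\bigvee_{i} \varphi_{i}(x_{1}, y, b)$ with $\bigvee_{i, l} \chi_{il}(x_{1}, y, b'_{il})$. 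Then $\theta(a, b)$ holds and $\theta(x_{1}, b) \to \theta'(x_{1}, B)$, so it suffices to prove $\theta'(x_{1}, B)$ $n$-$\eth$-divides over $M$: this will exhibit an $n$-$\eth$-forking formula in $\mathrm{tp}(a/Mb)$, contradicting $a \ind_{M}^{\eth^{n}} b$.

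To prove $\theta'(x_{1}, B)$ $n$-$\eth$-divides, I take an arbitrary $\ind^{\eth^{n-1}}$-Morley sequence $\{B_{k}\}_{k < \omega}$ over $M$ with $B_{0} = B$. Any common realization $a^{*}$ of $\{\theta'(x_{1}, B_{k})\}$ satisfies $\exists y\, \eta(y, a^{*})$, producing $c^{*}$ with $\eta(c^{*}, a^{*})$; then for every $k$ one has $\bigvee_{i, l} \chi_{il}(a^{*}, c^{*}, b'_{il, k})$. A pigeonhole on $(i, l)$ pins down one pair and an infinite subsequence, and monotonicity together with indiscernibility guarantees that projecting to the $(i, l)$-coordinate and restricting to this subsequence still yields an $\ind^{\eth^{n-1}}$-Morley sequence starting with an $M$-conjugate of $b'_{il}$, along which $\chi_{il}(x_{1}, x_{2}, b'_{il})$ is consistent — contradicting that $\chi_{il}$ $n$-$\eth$-divides over $M$. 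I expect the main delicate point to be the base case $n = 1$, where $\ind^{\eth^{n-1}}$ is ordinary forking independence and the footnote warns that $1$-$\eth$-dividing is not known to be invariant under adding dummy parameters; since the pigeonhole pins down a single pair $(i, l)$ \emph{before} extracting the subsequence, only the dividing of the specific formula $\chi_{il}(x_{1}, x_{2}, b'_{il})$ with its original parameters is invoked, which sidesteps this issue.
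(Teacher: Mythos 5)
Your part (1) is the same standard compactness argument as the paper's, so nothing to add there. Your part (2) is correct but takes a genuinely different route. The paper bootstraps part (1) to replace $b$ by a saturated elementary extension $M' \succ M$, shows non-$n$-$\eth$-dividing implies $\ind^{\eth^n}$ over the saturated $M'$, and then builds $c'a'$ positively via compactness: starting from the fact that $\exists y\,\psi(y,x)\wedge\varphi(x,d)$ does not $n$-$\eth$-divide, it extracts an $\ind^{\eth^{n-1}}$-Morley sequence $I$ and arranges (by Ramsey, compactness, automorphism) for $I$ to be indiscernible over $Ma'c'$, which then kills all candidate dividing formulas in $\mathrm{tp}(c'a'/Md)$. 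You instead run a pure contradiction: negate left-extension, apply compactness once to get a single ``bad'' implication $\eta(y,a)\to\bigvee_i\varphi_i(a,y,b)$, package it with a $\forall y$ guard into a formula $\theta(x_1,b)\in\mathrm{tp}(a/Mb)$, and show $\theta$ implies the single $n$-$\eth$-dividing formula $\theta'(x_1,B)$ via a pigeonhole on the dividing pieces $\chi_{il}$ along any $\ind^{\eth^{n-1}}$-Morley sequence. Both proofs hinge on the same two ingredients — monotonicity of $\ind^{\eth^{n-1}}$ under coordinate projection and the for-all nature of $n$-$\eth$-dividing when $n\geq 1$ — but yours avoids the saturated-model reduction and the chain of Ramsey/automorphism arguments entirely, at the (minor) cost of the $\forall y$ quantifier trick; the paper's positive construction by contrast also delivers the $Ma'c'$-indiscernible Morley sequence as a byproduct, which is not needed for the statement of the lemma. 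Your observation that pinning down a single pair $(i,l)$ before extracting the subsequence sidesteps the dummy-parameter subtlety for $1$-$\eth$-dividing is correct and a nice explicit point; the paper handles the same issue implicitly through its explicit monotonicity remark.
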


\begin{proof}
(1) This is known for $n=0$ and follows as in that case in the standard way for $n \geq 1.$ Suppose that  $a \ind_{M}^{\eth^n} b$, but there were no such $a'$. Then by compactness, there would be some formulas $\varphi(x, b) \in \mathrm{tp}(a/Mb)$, and $\varphi_{i}(x, d_{i})$ $n$-$\eth$-forking over $M$ for $d_{i} \subseteq Mbc$, so that $\models \varphi(x, b) \rightarrow \bigvee^{n}_{i=1} \varphi_{i}(x, d_{i})$. By definition of $n$-$\eth$-forking, for $1 \leq i \leq n$, there are $\varphi_{ij}(x, b_{ij})$ $n$-$\eth$-dividing over $M$ so that $\models \varphi_{i}(x, d_{i}) \rightarrow \bigvee_{j =1}^{n_{i}} \varphi_{ij}(x, b_{ij})$. Then $\models \varphi(x, b) \rightarrow \bigvee_{i=1}^{n} \bigvee_{j=1}^{n_{i}}\varphi_{ij}(x, b_{ij})$, so $\varphi(x, b)$ $n$-$\eth$-forks over $M$, contradicting $a \ind_{M}^{\eth^n} b$.

(2) Suppose that  $a \ind_{M}^{\eth^n} b$. Let $M'\succ M$ be an $(|M|+|T|)^{+}$-saturated elementary extension of $M$. By (1) there is $a' \equiv_{Mb} a$ with $a' \ind_{M}^{\eth^n} M'$. So by replacing $a$ with $a'$, we can assume that $b = M'$ is an $(|M|+|T|)^{+}$-saturated elementary extension of $M$.

We next show that, for any $d$ with $\mathrm{tp}(d/M')$ containing no formulas $n$-$\eth$-dividing over $M$, $d \ind_{M}^{\eth^n} M'$. This argument is standard from the literature. Suppose otherwise. Then $\mathrm{tp}(d/M')$ contains a $\varphi(x, b)$ $n$-$\eth$-forking over $M$ for $b \subset M'$. So there are $\varphi(x, b_{i})$ $n$-$\eth$-dividing over $M$ with $\models \varphi(x, b) \rightarrow \bigvee_{i=1}^{n} \varphi_{i}(x, b_{i})$. By $(|M|+|T|)^{+}$-saturation of $M'$ there are $b'_{1}, \ldots, b'_{n} \subset M'$ with $b'_{1} \ldots b'_{n} \equiv_{Mb} b_{1} \ldots b_{n}$. So $\varphi(x, b'_{i})$ $n$-$\eth$-divide over $M$ for $1 \leq i \leq n$ and $\models \varphi(x, b) \rightarrow \bigvee_{i=1}^{n} \varphi_{i}(x, b'_{i})$. By the latter, that $\varphi(x, b) \in \mathrm{tp}(d/M')$ and $\mathrm{tp}(d/M')$ is a complete type over $M'$ implies that there is some $1 \leq i \leq n$ with $\varphi_{i}(x, b'_{i}) \in \mathrm{tp}(d/M')$, a contradiction.

Now consider any $c$. It suffices to find $c'a' \equiv_{M} ca$ with $a' \equiv_{M'} a$ and $\mathrm{tp}(c'a'/M')$ containing no formulas $n$-$\eth$-dividing over $m$. So by compactness, for $\psi(y, x) \in \mathrm{tp}(ca/M)$ and $\varphi(x, d) \in \mathrm{tp}(a/M')$ with $d \subseteq M'$, it suffices to find $c'a'$ with $\models \psi(c', a') \wedge \varphi(a', d)$ so that $\mathrm{tp}(c'a'/Md)$ contains no formulas $\varphi'(y, x, e)$ $n$-$\eth$-dividing over $M$ with $e \subseteq d$. The formula $\exists y (y, x) \wedge \varphi(x, d)$ belongs to $\mathrm{tp}(a/M')$. So by $a \ind_{M}^{\eth^n} M'$, $\exists y (y, x) \wedge \varphi(x, d)$ does not $n$-$\eth$-divide over $M$. By definition of $n$-$\eth$-dividing, there is an $\ind^{\eth^{n-1}}$-Morley sequence over $M$, $I=\{d_{i}\}_{i < \omega}$ with $d_{0} = d$, so that $\{\exists y (y, x) \wedge \varphi(x, d_{i})\}_{i < \omega}$ is consistent, realized by $a'$. By Ramsey's theorem, compactness, and an automorphism, we choose $a'$ so that $I$ is indiscernible over $Ma'$. In particular, $\models \exists y \psi(y,a') $, so choose $c'$ so that $\models \psi(c', a')$. By another application of Ramsey's theorem, compactness and an automorphism, we can choose $c'$ so that $I$ is indiscernible over $Ma'c'$. It remains to show that $\mathrm{tp}(c'a'/Md)$ contains no formulas $\varphi'(y, x, e)$ $n$-$\eth$-dividing over $M$ with parameters $e \subseteq d$. For $i < \omega$ there are $e_{i} \subseteq d_{i}$ with $\{e_{i}\}_{i < \omega}$ $Ma'c'$-indiscernible and $e_{0} =e$. By definition of $\ind^{\eth^{n-1}}$-Morley sequence, for $i < \omega$, $d_{i} \ind_{M}^{\eth^{n-1}} d_{0} \ldots d_{i-1}$. So it follows from the definition of $\ind_{M}^{\eth^{n-1}}$ that $e_{i} \ind_{M}^{\eth^{n-1}} e_{0} \ldots e_{i-1}$ (i.e. $\ind_{M}^{\eth^{n-1}}$ is monotone.) So $\{e_{i}\}_{i < \omega}$ is an $\ind_{M}^{\eth^{n-1}}$-Morley sequence over $M$. Let $\varphi'(y, x, e) \in \mathrm{tp}(c'a'/Md)$. Then by $Ma'c'$-indiscernibility, $\{\varphi'(y, x, e_{i})\}_{i < \omega}$ is consistent, realized by $a'c'$. So $\varphi'(y, x, e)$ does not $n$-$\eth$-divide over $M$.

\end{proof}

\begin{proposition}\label{5-fd}
For $n \geq 2$, $n$-$\eth$-forking coincides with $n$-$\eth$-dividing.

\end{proposition}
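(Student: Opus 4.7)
The direction ``$n$-$\eth$-dividing implies $n$-$\eth$-forking'' is immediate from the definitions. For the converse, suppose $\varphi(x, b)$ $n$-$\eth$-forks over $M$, witnessed by formulas $\varphi_i(x, b_i)$ for $1 \leq i \leq m$, each $n$-$\eth$-dividing over $M$, with $\models \varphi(x, b) \to \bigvee_{i=1}^{m} \varphi_i(x, b_i)$. Set $\bar{b} = bb_1\cdots b_m$. My goal is to show that for an arbitrary $\ind^{\eth^{n-1}}$-Morley sequence $\{c_k\}_{k<\omega}$ over $M$ with $c_0 = b$, the set $\{\varphi(x, c_k)\}_{k<\omega}$ is inconsistent.

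The central step is to promote $\{c_k\}$ to an $\ind^{\eth^{n-1}}$-Morley sequence $\{\bar{c}_k\}_{k<\omega}$ over $M$ of longer tuples $\bar{c}_k = (c_k^*, c_{1,k}, \ldots, c_{m,k})$ with each $\bar{c}_k \equiv_M \bar{b}$, and whose first coordinates $\{c_k^*\}$ have the same $M$-EM-type as $\{c_k\}$. I would build this inductively: at stage $k$, the first-coordinate Morley condition $c_k^* \ind^{\eth^{n-1}}_M c_0^*\cdots c_{k-1}^*$ is upgraded using right extension (Lemma \ref{5-ext}(1)), composed with an $Mc_0^*\cdots c_{k-1}^*$-automorphism that fixes the first coordinates (replacing the previously chosen auxiliary coordinates by automorphic copies without disturbing the Morley condition already established), to $c_k^* \ind^{\eth^{n-1}}_M \bar{c}_0\cdots \bar{c}_{k-1}$. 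Then left extension (Lemma \ref{5-ext}(2), which applies precisely because $n - 1 \geq 1$) supplies $c_{1,k},\ldots,c_{m,k}$ with $(c_k^*, c_{1,k}, \ldots, c_{m,k}) \equiv_M \bar{b}$ and $\bar{c}_k \ind^{\eth^{n-1}}_M \bar{c}_0\cdots \bar{c}_{k-1}$. A standard Ramsey--compactness extraction then yields an $M$-indiscernible sequence that retains the Morley condition (preserved by $M$-invariance of $\ind^{\eth^{n-1}}$ together with monotonicity, as noted in the proof of Lemma \ref{5-ext}) and whose first coordinates still realize the $M$-EM-type of the original $\{c_k\}$.

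Given $\{\bar{c}_k\}$, EM-equivalence of first coordinates transfers consistency of $\{\varphi(x, c_k)\}$ to consistency of $\{\varphi(x, c_k^*)\}$, realized by some $a$. Choose $M$-automorphisms $\sigma_k$ with $\sigma_k(\bar{b}) = \bar{c}_k$; applying $\sigma_k$ to the forking implication yields $\varphi(x, c_k^*) \vdash \bigvee_i \varphi_i(x, c_{i,k})$, so $a \models \bigvee_i \varphi_i(x, c_{i,k})$ for each $k$. Pigeonhole over the $m$ indices produces a single $i$ and an infinite $K \subseteq \omega$ with $a \models \varphi_i(x, c_{i,k})$ for all $k \in K$. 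By monotonicity, $\{c_{i,k}\}_{k \in K}$ is an $\ind^{\eth^{n-1}}$-Morley sequence over $M$ starting with $c_{i,0} = b_i$, on which $\{\varphi_i(x, c_{i,k})\}$ is consistent---contradicting that $\varphi_i(x, b_i)$ $n$-$\eth$-divides over $M$.

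The main obstacle is the inductive construction of $\{\bar{c}_k\}$: one must alternate right extension (to upgrade independence from the first coordinates up to the full tuples, absorbed by a first-coordinate-fixing automorphism) with left extension (to append the new auxiliary coordinates), and then pass to an $M$-indiscernible sequence while preserving both the Morley condition and the first-coordinate EM-type. The hypothesis $n \geq 2$ is essential here, since left extension of $\ind^{\eth^{n-1}}$ is only established for $n - 1 \geq 1$; this is also precisely why the $n = 1$ case cannot be accessed by the same argument and must be handled separately throughout the paper.
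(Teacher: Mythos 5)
Your proposal is correct and follows essentially the same route as the paper: build the auxiliary coordinates onto the given $\ind^{\eth^{n-1}}$-Morley sequence by alternating right extension (with a base-fixing automorphism to re-choose prior auxiliary coordinates) and left extension, extract an indiscernible Morley sequence, and close with the standard pigeonhole contradiction against $n$-$\eth$-dividing. The paper works with a $\kappa$-indexed sequence and Erd\H{o}s--Rado rather than Ramsey extraction from an $\omega$-sequence, and it pins the first term of the extracted sequence to $\bar b$ by an automorphism rather than working up to $M$-conjugacy, but these are cosmetic differences; your remark that $n\geq 2$ is exactly what makes left extension of $\ind^{\eth^{n-1}}$ available matches the paper's use of Lemma \ref{5-ext}.
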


\begin{proof}
    Exactly as in Fact 6.1 of \cite{GFA}, using right- and left-extension for $\ind^{\eth^{n-1}}$, and the standard arguments. Suppose $\varphi(x, b)$ $n$-$\eth$-forks over $M$. Then $\varphi(x, b) \to \bigvee_{j =1}^{N} \varphi_{j}(x, c^{j})$ for some $\varphi_{j}(x, c^{j})$ $n$-$\eth$-dividing over $M$. We show that $\varphi(x, b)$ $n$-$\eth$-divides over $M$; suppose otherwise. Then by the definition of $n$-$\eth$-dividing and compactness, there is an $\ind^{\eth^{n-1}}$-Morley sequence $\{b_{i}\}_{i < \kappa}$, for large $\kappa$, i.e. indiscernible over $M$ with $b_{i} \ind^{\eth^{n-1}} b_{<i}$ for $i < \kappa$, with $b_{0} = b$ and $\{\varphi(x, b_{i})\}_{i< \kappa}$ consistent. By induction we find $c^{j}_{i}$, $1 \leq j \leq N$, $i< \kappa$, so that $\{c^{j}_{i}\}^{N}_{j = 1}b_{i} \equiv_{M} \{c^{j}\}^{N}_{j =1}b$ and $\{c^{j}_{i}\}^{N}_{j = 1}b_{i} \ind_{M}^{\eth^{n-1}}  \{c^{j}_{<i}\}^{N}_{j = 1}b_{<i}$ for $i < \kappa $. Suppose by induction that for $\lambda < \kappa$ we have found $c^{j}_{i}$, $1 \leq j \leq N$, $i< \lambda$, so that $\{c^{j}_{i}\}^{N}_{j = 1}b_{i} \equiv_{M} \{c^{j}\}^{N}_{j =1}b$ and $\{c^{j}_{i}\}^{N}_{j = 1}b_{i} \ind_{M}^{\eth^{n-1}}  \{c^{j}_{<i}\}^{N}_{j = 1}b_{<i}$ for $i < \lambda $. Then because $b_{\lambda}  \ind_{M}^{\eth^{n-1}} b_{<\lambda}$, by right extension we could have chosen $c^{j}_{i}$, $1 \leq j \leq N$, $i< \lambda$ so that $b_{\lambda} \ind_{M}^{\eth^{n-1}}  \{c^{j}_{<\lambda}\}^{N}_{j = 1}b_{<\lambda}$. Now by left extension and an automorphism, find $c^{j}_{\lambda}$, $1 \leq j \leq N$, with $\{c_{\lambda}^{j}\}^{N}_{j =1}b_{\lambda} \equiv_{M} \{c^{j}\}^{n}_{j =1}b$ and $\{c_{\lambda}^{j}\}^{N}_{j =1}b_{\lambda} \ind_{M}^{\eth^{n-1}}  \{c^{j}_{<\lambda}\}^{n}_{j = 1}b_{<\lambda}$. This completes the induction. Now by the Erdős-Rado theorem and an automorphism, we can find $c'^{j}_{i}$ for $i < \omega$, $1 \leq j \leq N$, so that $\{c_{i}'^{1} \ldots c'^{N}_{i}b_{i}\}_{i< \omega}$ is an $\ind^{\eth^{n-1}}$-Morley sequence over $M$ with $c_{0}'^{1} \ldots c'^{N}_{0}b_{0}=c^{1} \ldots c^{N}b$.  Now we give the standard argument to get a contradiction. Each $\{c'^{j}_{i}\}_{i < \omega}$ for $1 \leq j \leq N$ is an $\ind^{\eth^{n-1}}$-Morley sequence over $M$ with $c'^{j}_{0} = c^{j}$. Let $a$ realize $\{\varphi(x, b_{i})\}_{i< \kappa}$.  For each $i < \omega$, $\models \varphi(x, b_{i}) \rightarrow \bigvee_{j =1}^{N} \varphi_{j}(x, c_{i}'^{j})$, so $\models \varphi_{j}(a, c_{i}'^{j})$ for some $1 \leq j \leq n$. So there is some $1 \leq j \leq n$ so that $\models \varphi_{j}(a, c_{i}'^{j})$ for infinitely many $i < \omega$. By an automorphism, $\{\varphi_{j}(x, c'^{j}_{i})\}_{i < j}$ is consistent. But because $\{c'^{j}_{i}\}_{i < \omega}$ is an $\ind^{\eth^{n-1}}$-Morley sequence over $M$ with $c'^{j}_{0} = c^{j}$, this contradicts $n$-$\eth$-dividing of $\varphi_{j}(x, c^{j})$ over $M$.
\end{proof}

While we will not use the results of the rest of this section in the sequel, they will give some additional motivation to the main results of this paper and to the open question posed at the end. We will be interested in the question of when $\ind^{\eth^{n}}$ has properties analogous to Kim-independence in $\mathrm{NSOP}_{1}$, including transitivity (\cite{KR19}) and especially symmetry (\cite{KR17}). If the answer to Question \ref{5-openquestion} is positive, either of these properties will be equivalent to $\mathrm{NSOP}_{2^{n+1}+1}$, so in particular symmetry of $\ind^{\eth^{n}}$ will imply symmetry of $\ind^{\eth^{n+1}}$. However, even the question of whether symmetry of $\ind^{\eth^{n}}$ implies symmetry of $\ind^{\eth^{n+1}}$ is still open. But if we strengthen symmetry to an analogue of the stable forking conjecture, a linear hierarchy starts to emerge. 

\begin{definition}
    A theory $T$ satisfies the \emph{stable $n$-$\eth$-forking conjecture} if whenever $a \nind^{\eth^{n}}_{M} b$, there is some $L(M)$-formula $\varphi(x, y)$, which is stable as an $L(M)$-formula, so that $\models \varphi(a, b)$ and $\varphi(x, b)$ $n$-$\eth$-forks over $M$.
\end{definition}

\begin{remark}\label{5-sfc}
    Unlike the stable forking conjecture for simple theories, here we require only that $\varphi(x, y)$ be stable as a $L(M)$-formula, not an $L$-formula. It makes sense to make this allowance outside of the simple case, as the analogous conjecture about Kim-forking fails for, say, $T^{\mathrm{feq}}$ (see, e.g., \cite{She94}, \cite{SD00}, \cite{CR15}, \cite{KR17}), when the stability is as an $L$-formula, but the ``stable Kim-forking conjecture" is open for $\mathrm{NSOP}_{1}$ theoires when the stability is taken to be as an $L(M)$ formula.
\end{remark}

Note that if a stable formula $1$-$\eth$-forks over $M$, it forks over $M$, so by basic stability theory divides with respect to any nonforking Morley sequence; that is, $1$-$\eth$-divides over $M$. If $T$ satisfies the stable $1$-$\eth$-forking conjecture, it follows that if $\mathrm{tp}(a/Mb)$ contains no $1$-$\eth$-dividing formulas, $a \ind_{M}^{\eth^{1}} b$.

The first part of this fact is well-known; the second is observed without proof for theories in \cite{Bu91}, but the evident proof works equally well for formulas.

\begin{fact}
    If a formula $\varphi(x, y)$ is stable, it is without the tree property and is low: there is some $k$ so that for $\{b_{i}\}_{i \in I}$ an indiscernible sequence, $\{\varphi(x, b_{i})\}_{i \in I}$ is inconsistent if and only if it is $k$-inconsistent.
\end{fact}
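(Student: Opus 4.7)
The plan is to prove the two assertions separately, using standard type-counting for the no tree property part and then extracting lowness from the finite local rank associated with a stable formula.

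For the first assertion, that a stable $\varphi(x,y)$ has no tree property, I would argue by contradiction. Suppose $\varphi(x,y)$ has the tree property, witnessed by parameters $(b_{\eta})_{\eta \in \omega^{<\omega}}$ with every branch $\{\varphi(x, b_{\eta|_n}) : n < \omega\}$ consistent and every fan $\{\varphi(x, b_{\eta^{\frown} i}) : i < \omega\}$ $k$-inconsistent for a fixed $k$. By a Ramsey/compactness refinement (strong indiscernibility of the tree) I may assume the tree is sufficiently indiscernible. Restrict to $2^{\aleph_0}$ branches in $2^\omega$, and for each $\eta \in 2^\omega$ pick $a_\eta$ realizing the branch. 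Two distinct branches $\eta \neq \eta'$ diverge at some level, and at the immediate successor level the two chosen parameters lie in a common $k$-inconsistent fan, so $a_\eta$ and $a_{\eta'}$ satisfy $\varphi$-formulas that cannot both be simultaneously satisfied; hence $\tp_\varphi(a_\eta/A) \neq \tp_\varphi(a_{\eta'}/A)$ where $A$ is the countable set of tree parameters. This produces $2^{\aleph_0}$ distinct $\varphi$-types over a countable set, contradicting the fact that a stable formula is $\kappa$-stable for every infinite $\kappa$.

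For the lowness assertion, the plan is to reduce to the finiteness of a local rank. Any stable formula $\varphi(x,y)$ carries a finite local rank $R_\varphi$ (Shelah's $\varphi$-rank, or the local Morley $\varphi$-rank); this is the key input from stability. I would then show that if $\varphi$ is not low, then $R_\varphi$ can be driven unboundedly high. Concretely, assume for contradiction that for every $k$ there exists an indiscernible sequence $(b_i^k)_{i < \omega}$ with $\{\varphi(x, b_i^k) : i < \omega\}$ inconsistent yet $k$-consistent. For each $k$, partial realizations of the $k$-tuples give elements whose $\varphi$-type splits further down the indiscernible sequence, and iterating this $k$ times (with $k$ large compared to $R_\varphi$) produces a $\varphi$-dividing chain of rank exceeding $R_\varphi$, a contradiction. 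Equivalently, one may invoke the classical stable fact that $\varphi(x, b)$ divides over $M$ if and only if some fixed number $k_\varphi$ of $M$-conjugates of $\varphi(x, b)$ are already inconsistent, and this $k_\varphi$ is precisely the lowness constant for $\varphi$.

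The main obstacle is the lowness step: the NTP part is a direct type-counting argument, whereas lowness demands a uniform bound. The technical heart is turning the qualitative statement ``$\varphi$ has no order property'' into a quantitative finite-rank bound and then arguing that any would-be failure of lowness forces the rank to exceed this bound. Once the finite-rank machinery for stable formulas is in place, the combinatorial rank-increment argument along the putative non-low indiscernible sequences is straightforward.
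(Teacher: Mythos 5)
The paper does not prove this Fact: it labels the no-tree-property half as ``well-known'' and cites Buechler's book \cite{Bu91} for lowness of stable theories, remarking only that the argument given there for theories works for a single stable formula. So there is no paper proof to compare against; what follows is a comparison to the ``evident'' argument that the citation points to.

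Your treatment of the first half (no tree property) is standard and essentially correct: from a $\varphi$-tree of parameters one extracts $2^{\aleph_0}$ realizers of branches whose $\varphi$-types over the countable tree are pairwise distinct, violating $\aleph_0$-stability of $\varphi$. One small point to flag: with only $k$-inconsistency along the fans for some $k>2$, two realizers diverging at a node need not differ on a single $\varphi$-formula, so you must first reduce to the case $k=2$ (e.g.\ by passing to conjunctions, or by the usual Ramsey/path-compression refinement of the tree), which you allude to but do not carry out. This is routine and not a real gap.

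The lowness half is where there is a genuine gap. Your proposed route is to use finiteness of a local rank $R_\varphi$ and argue that failure of lowness drives the rank up. As sketched this does not work: the hypothesis gives, for each $k$, a \emph{single} indiscernible sequence $(b^k_i)$ with $\{\varphi(x,b^k_i)\}$ inconsistent yet $k$-consistent, and from one such sequence one gets at most one step of a $D$-rank or splitting-rank chain, not a nested chain of length $k$. ``Iterating this $k$ times'' would require a \emph{tree} of indiscernible sequences, which you have not produced, and indeed cannot produce from the hypothesis alone --- there are simple non-low theories, so finiteness of the local $D$-ranks $D(\cdot,\varphi,k)$ is not by itself enough to force lowness. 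The closing remark that ``equivalently, one may invoke the classical stable fact that $\varphi(x,b)$ divides over $M$ if and only if some fixed number $k_\varphi$ of $M$-conjugates are already inconsistent'' is essentially a restatement of the conclusion and so is circular as a proof.

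The ``evident proof'' is more direct and uses the order property (finite ladder index) rather than local rank. Suppose $\{b_i\}_{i\in\mathbb{Z}}$ is indiscernible and $\{\varphi(x,b_i)\}_{i<\omega}$ is inconsistent; let $m$ be minimal with $m$-inconsistency. Pick $a_0\models\bigwedge_{0\le i\le m-2}\varphi(x,b_i)$; by $m$-inconsistency and indiscernibility, $\models\varphi(a_0,b_i)$ holds exactly for $0\le i\le m-2$. Shifting along the sequence by automorphisms gives $a_j$ with $\models\varphi(a_j,b_i)$ iff $j\le i\le j+m-2$, and then for $j,k\in\{0,\dots,m-2\}$ one has $\models\varphi(a_j,b_k)$ iff $j\le k$. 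This is a $\varphi$-ladder of length $m-1$, so $m-1$ is bounded by the ladder index of $\varphi$, which is finite by stability. This bound on $m$ is exactly the lowness constant $k$ in the statement, and it uses only the order-property definition of stability, with no appeal to local ranks.
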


The argument for the following is similar to the literature: see, for example, Theorem 5.16 of \cite{KR17} or Theorem 5.3 of \cite{NSOP2}. The construction of the tree is lifted mostly word-for-word from the proof of Theorem 5.3 of \cite{NSOP2}, but we follow the local approach in the paragraph following the proof of that theorem.

\begin{proposition}
    If $T$ satisfies the stable $n$-$\eth$-forking conjecture, then $\ind^{\eth^{n}}$ is symmetric.
\end{proposition}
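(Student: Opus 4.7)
The plan is to argue by contradiction: assume $a \nind^{\eth^{n}}_M b$ while $b \ind^{\eth^{n}}_M a$, and derive a contradiction by producing, along a short $\ind^{\eth^{n-1}}$-Morley sequence over $M$ starting with $b$, simultaneous realizations of a fixed stable formula by $a$. First, applying the stable $n$-$\eth$-forking conjecture to $a \nind^{\eth^{n}}_M b$, I fix a stable $L(M)$-formula $\varphi(x,y)$ with $\models \varphi(a,b)$ and $\varphi(x,b)$ $n$-$\eth$-forking over $M$. For $n \geq 2$, Proposition \ref{5-fd} upgrades this to $\varphi(x,b)$ $n$-$\eth$-dividing over $M$; for $n=1$, the same upgrade follows from the observation recorded above (between Remark \ref{5-sfc} and the Fact on lowness) that a stable formula which $1$-$\eth$-forks over $M$ divides along every nonforking Morley sequence, hence $1$-$\eth$-divides over $M$. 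By the lowness of $\varphi$, I then fix $k<\omega$ such that for any $M$-indiscernible sequence $(c_i)_{i<\omega}$, inconsistency of $\{\varphi(x,c_i)\}_{i<\omega}$ coincides with $k$-inconsistency.

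The core of the proof is then the construction, essentially verbatim from Theorem 5.3 of \cite{NSOP2}, of a tree $(b_\eta)_{\eta \in \omega^{<\omega}}$ such that (i) $b_\emptyset = b$, (ii) each $b_\eta \equiv_{Ma} b$, so that $\models \varphi(a,b_\eta)$, and (iii) at each node the immediate successors form an $\ind^{\eth^{n-1}}$-Morley sequence over $M$, arranged coherently with the branch above so that each infinite path through the tree is itself an $\ind^{\eth^{n-1}}$-Morley sequence over $M$ starting with $b$. The construction is inductive: I use $b \ind^{\eth^{n}}_M a$ to initialize, and both right-extension and left-extension for $\ind^{\eth^{n-1}}$ from Lemma \ref{5-ext} to propagate, with right-extension spawning new coordinates realizing $\mathrm{tp}(b/Ma)$, left-extension absorbing earlier coordinates into the base so that the Morley condition stays over $M$ rather than over $Ma$, and Ramsey/Erd\H{o}s--Rado extractions preserving indiscernibility along branches. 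Following the ``local approach'' from the paragraph after Theorem 5.3 of \cite{NSOP2}, I only need the tree to depth $k$: extracting an $M$-indiscernible path and invoking compactness yields an $\ind^{\eth^{n-1}}$-Morley sequence $(b_i)_{i<\omega}$ over $M$ with $b_0 = b$ and $\models \varphi(a,b_i)$ for each $i<k$. Then $\{\varphi(x,b_i)\}_{i<\omega}$ is $k$-consistent by indiscernibility, hence consistent by lowness, contradicting $n$-$\eth$-dividing of $\varphi(x,b)$.

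The main obstacle is the simultaneous maintenance, during the tree construction, of three tension-prone requirements: each node must realize $\mathrm{tp}(b/Ma)$ so as to preserve $\varphi(a,\cdot)$; each path must be an $\ind^{\eth^{n-1}}$-Morley sequence over the smaller base $M$, not over $Ma$; and $M$-indiscernibility along branches must be preserved at every successive level. Balancing these conditions requires the careful iterated alternation of the two halves of Lemma \ref{5-ext} as worked out in Theorem 5.3 of \cite{NSOP2}; the only new ingredient here is the finite termination at depth $k$, which is precisely what makes the lowness of the stable witness $\varphi$ the critical input beyond the NSOP2 tree construction.
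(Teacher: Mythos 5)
There is a genuine gap in your tree construction, and the paper's proof does something structurally different that avoids it.

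Your plan is to build a tree $(b_\eta)_{\eta \in \omega^{<\omega}}$ in which \emph{every node} realizes $\mathrm{tp}(b/Ma)$ for the single fixed $a$, and in which every path is an $\ind^{\eth^{n-1}}$-Morley sequence over $M$ starting with $b$. But if such a tree existed, then a single infinite path $(b_i)_{i<\omega}$ would already be an $\ind^{\eth^{n-1}}$-Morley sequence over $M$ with $\models \varphi(a,b_i)$ for all $i$ -- so $\{\varphi(x,b_i)\}_{i<\omega}$ is realized by $a$ and $n$-$\eth$-dividing of $\varphi(x,b)$ fails on the spot. Neither lowness nor stability would be used, and the conclusion would not depend on $\varphi$ being stable at all: the same argument would prove that $a \nind^{\eth^n}_M b$ and $b \ind^{\eth^n}_M a$ are jointly impossible in every theory. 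That is too strong; Theorem \ref{5-symm} and its proof exhibit $\mathrm{SOP}_{2^{n+1}+1}$ theories where $\ind^{\eth^n}$ is genuinely asymmetric. So the configuration you want to build cannot exist in general, and the gap is exactly in the claim that you can propagate it using $b \ind^{\eth^n}_M a$.

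The reason the hypothesis does not support your construction is the built-in asymmetry in the definitions. That $b \ind^{\eth^n}_M a$ holds means $\mathrm{tp}(b/Ma)$ contains no $n$-$\eth$-forking formulas; combined with Proposition \ref{5-fd}, this tells you that for any tuple $X$ with $b \ind^{\eth^n}_M X$ you can find an $\ind^{\eth^{n-1}}$-Morley sequence \emph{of copies of $X$} over $M$, indiscernible over $Mb$. It does \emph{not} let you produce an $\ind^{\eth^{n-1}}$-Morley sequence of copies of $b$ all realizing $\mathrm{tp}(b/Ma)$. Your steps (ii) and (iii) demand exactly the latter.

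The paper's construction (following Theorem 5.3 of \cite{NSOP2}) is arranged so that the thing being spun into Morley sequences sits on the correct side of $\ind^{\eth^n}$. In its tree $(\{a_\eta\}_{\eta\in\omega^{\leq n}}, \{b_\sigma\}_{\sigma\in\omega^n})$, the internal nodes are copies of $a$ with $a_\eta \equiv_M a$ only (not $\equiv_{Ma}$ anything), the branches at a node -- i.e., the \emph{siblings}, not the paths -- form the $\ind^{\eth^{n-1}}$-Morley sequences indiscernible over $Ma_\eta$, and each path $\sigma$ receives its own \emph{fresh} leaf $b_\sigma$ with $\models \varphi(b_\sigma, a_\eta)$ for all $\eta \unlhd \sigma$. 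Consistency along a path is witnessed by $b_\sigma$, while $k$-inconsistency at each node comes from lowness together with $n$-$\eth$-dividing along the sibling Morley sequence. This is the $k$-tree property for the stable formula $\varphi$, and that -- not a direct refutation of $n$-$\eth$-dividing -- is the contradiction.

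So, to repair your argument: keep the overall strategy of a depth-$k$ tree controlled by lowness, but make the tree's internal nodes copies of $a$ (or, if you keep your labeling with $a \nind^{\eth^n}_M b$, copies of $b$) with the sibling sequences being the $\ind^{\eth^{n-1}}$-Morley sequences; introduce separate leaves, one per path, realizing $\varphi$ against all of the path's internal nodes; drop the requirement that every node satisfy $\varphi(a,\cdot)$ against a fixed $a$; and derive the $k$-tree property rather than trying to disprove $n$-$\eth$-dividing directly.
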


\begin{proof} 

Assume $a \ind_{M}^{\eth^{n}}b$ but $b \nind_{M}^{\eth^{n}}a$. Let $\varphi(x, y)$ be a stable $L(M)$-formula with $\models \varphi(b, a)$ and $\varphi(x, a)$ a stable formula $n$-$\eth$-forking over $M$; by Proposition \ref{5-fd} and the remarks after Remark \ref{5-sfc}, this $n$-$\eth$-divides over $M$. We find, for any $n$, a tree $(I_{n}, J_{n})= (\{a_{\eta}\}_{\eta \in \omega^{\leq n}}, \{b_{\sigma}\}_{\sigma \in \omega^{n}})$, infinitely branching at the first $n+1$ levels and then with each $a_{\sigma}$ for $\sigma \in \omega^{n}$ at level $n+1$ followed by a single additional leaf $b_{\sigma}$ at level $n+2$, satisfying the following properties:

(1) For $\eta \unlhd \sigma$, $\models \varphi(b_{\sigma}, a_{\eta})$.

(2) For $\eta \in \omega^{< n}$, the branches at $\eta$ form an $\ind^{\eth^{n-1}}$-Morley sequence over $M$ indiscernible over $Ma_{\eta}$, so by Proposition \ref{5-fd} and the remarks after Remark \ref{5-sfc}, $a_{\eta}$ is $n$-$\eth$-independent over $M$ from those branches taken together.

Suppose $(I_{n}, J_{n})$ already constructed; we construct $(I_{n+1}, J_{n+1})$. We see that the root $a_{\emptyset}$ of $(I_{n}, J_{n})$ is $n$-$\eth$-independent from the rest of the tree, $(I_{n}, J_{n})^{*}$: for $n = 0$ this is just the assumption $a \ind^{\eth^{n}}_{M} b$, while for $n > 0$ this is (2). So by extension we find $a_{\emptyset}' \equiv_{M(I_{n}J_{n})^{*}} a_{\emptyset}$ (so guaranteeing (1)), to be the root of $(I_{n+1}, J_{n+1})$, with $a'_{\emptyset} \ind^{K^{*}}_{M} I_{n}J_{n}$. Then by applying $a \ind^{K^{*}}_{M} I_{n}J_{n}$ to the formulas giving (1), find some $\ind^{\eth^{n-1}}$-Morley sequence $\{(I_{n},J_{n})^{i}\}_{i \in \omega}$ with $(I_{n},J_{n}) \equiv_{M} (I_{n},J_{n})^{i}$ indiscernible over $Ma'_{\emptyset}$, guaranteeing (2) and preserving (1), and reindex accordingly.

By lowness and $n$-$\eth$-dividing of $\varphi(x, a)$ over $M$, the successors to each node witness $k$-dividing of $\varphi(x, a)$ over $M$ for some fixed $k$. This together with $1$ gives the $k$-tree property for $\varphi(x, a)$, a contradiction.
\end{proof}

We now show the linear hierarchy obtained when symmetry of $n$-$\eth$-independence is improved to the conclusion of the stable $n$-$\eth$-forking conjecture.

\begin{proposition}
    If $T$ satisfies the stable $n$-$\eth$ forking conjecture, then $\ind^{\eth^{n}}= \ind^{\eth^{n+1}}$ (so $\ind^{\eth^{n}}= \ind^{\eth^{m}}$ for $m \geq n$.)
\end{proposition}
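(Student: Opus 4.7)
The plan is to establish the two inclusions $\ind^{\eth^n} \subseteq \ind^{\eth^{n+1}}$ and $\ind^{\eth^{n+1}} \subseteq \ind^{\eth^n}$ separately, and then iterate. The first rests only on symmetry of $\ind^{\eth^n}$, supplied by the preceding proposition; the second uses the stable $n$-$\eth$-forking hypothesis together with the classical stability-theoretic fact that, for a stable formula, dividing is witnessed uniformly on every $M$-indiscernible sequence.

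For $\ind^{\eth^n} \subseteq \ind^{\eth^{n+1}}$, suppose $a \ind^{\eth^n}_M b$ and fix $\varphi(x, b) \in \mathrm{tp}(a/Mb)$. By Proposition \ref{5-fd} it suffices to show $\varphi(x, b)$ does not $(n{+}1)$-$\eth$-divide over $M$. Since $\ind^{\eth^n}$ is symmetric we have $b \ind^{\eth^n}_M a$, so iterated application of right-extension (Lemma \ref{5-ext}(1)) lets us build $\{b_i\}_{i < \omega}$ with $b_0 = b$, each $b_i \equiv_{Ma} b$, and $b_i \ind^{\eth^n}_M a b_{<i}$ (hence $b_i \ind^{\eth^n}_M b_{<i}$ by monotonicity). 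An Erd\H{o}s--Rado extraction, exactly as in the construction in Proposition \ref{5-fd}, then yields an $\ind^{\eth^n}$-Morley sequence $\{b'_i\}_{i < \omega}$ over $M$ with $b'_0 = b$, indiscernible over $Ma$. Since $\models \varphi(a, b)$, by indiscernibility $\models \varphi(a, b'_i)$ for all $i$, so $\{\varphi(x, b'_i)\}_{i < \omega}$ is consistent.

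For the reverse inclusion I argue contrapositively. Assume $a \nind^{\eth^n}_M b$, and apply the stable $n$-$\eth$-forking hypothesis to obtain a stable $L(M)$-formula $\varphi(x, y)$ with $\models \varphi(a, b)$ and $\varphi(x, b)$ $n$-$\eth$-forking over $M$. By Proposition \ref{5-fd} for $n \geq 2$, or for $n = 1$ by the remarks following Remark \ref{5-sfc}, this upgrades to $n$-$\eth$-dividing; since any $\ind^{\eth^{n-1}}$-Morley sequence is $M$-indiscernible, $\varphi(x, b)$ then divides over $M$ in the classical sense. By classical stability (together with the uniform bound $k$ supplied by the Fact above), dividing of a stable formula is witnessed on every $M$-indiscernible sequence beginning with $b$, hence in particular on every $\ind^{\eth^n}$-Morley sequence beginning with $b$. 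Thus $\varphi(x, b)$ $(n{+}1)$-$\eth$-divides over $M$, witnessing $a \nind^{\eth^{n+1}}_M b$.

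The equality $\ind^{\eth^n} = \ind^{\eth^m}$ for $m \geq n$ is then immediate by induction: if $\ind^{\eth^{m-1}} = \ind^{\eth^m}$, they induce identical classes of Morley sequences, so $\ind^{\eth^{m+1}}$, defined via $\ind^{\eth^m}$-Morley sequences, coincides with $\ind^{\eth^m}$, defined via $\ind^{\eth^{m-1}}$-Morley sequences. The main technical care lies in the Erd\H{o}s--Rado step of the first inclusion, where one must verify that the $\ind^{\eth^n}$-Morley property is encoded in the EM-type and hence preserved in the extraction; this is handled exactly as in Proposition \ref{5-fd}, so I expect no new obstacle beyond what was already present there.
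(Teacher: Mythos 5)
Your proof of the forward inclusion $\ind^{\eth^n} \subseteq \ind^{\eth^{n+1}}$ is correct, though it is more work than necessary: this direction is automatic and requires neither symmetry nor the stable forking hypothesis. One can show $\ind^{\eth^{m-1}} \subseteq \ind^{\eth^m}$ for all $m$ by a simple induction (the base case is that $1$-$\eth$-dividing implies dividing since a nonforking Morley sequence always exists; the inductive step is that a larger independence relation gives a larger class of Morley sequences, hence a more demanding dividing condition, hence fewer forking formulas). The paper treats this direction as implicit and devotes the entire proof to the reverse inclusion. Your iteration step at the end is also fine.

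The reverse inclusion $\ind^{\eth^{n+1}} \subseteq \ind^{\eth^n}$, however, contains a genuine gap, and it is exactly where the whole difficulty of the proposition lies. You assert that ``by classical stability (together with the uniform bound $k$ supplied by the Fact above), dividing of a stable formula is witnessed on every $M$-indiscernible sequence beginning with $b$.'' This is false. The correct stability-theoretic fact is that a stable formula that divides over $M$ divides along every \emph{nonforking Morley} sequence over $M$; it certainly does not divide along an arbitrary $M$-indiscernible sequence. (Consider the theory of an equivalence relation with infinitely many infinite classes, $M$ a model, $b \notin M$: then $E(x,b)$ is stable and divides over $M$, but along an $M$-indiscernible sequence of pairwise $E$-equivalent elements, $\{E(x,b_i)\}$ is consistent.) The lowness Fact you cite only says that \emph{if} a given indiscernible sequence witnesses inconsistency, \emph{then} it witnesses $k$-inconsistency; it does not say that every indiscernible sequence witnesses inconsistency. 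The point you need is that an $\ind^{\eth^n}$-Morley sequence (for $n \geq 1$) witnesses dividing, but since $\ind^{\eth^n} \supseteq \ind^f$, an $\ind^{\eth^n}$-Morley sequence need not be a nonforking Morley sequence, so the stability fact does not apply directly. This is precisely what makes the proposition nontrivial. The paper's proof handles it by extending the $\ind^{\eth^n}$-Morley sequence $I = \{b_i\}_{i<\omega}$ to $\{b_i\}_{i < \omega + \omega}$, observing that the tail $\{b_i\}_{\omega \leq i < \omega + \omega}$ \emph{is} a nonforking Morley sequence over the new base $MI$ (by finite satisfiability of the tail over the body), and then showing separately, using symmetry of $\ind^{\eth^n}$ and lowness to transfer $k$-dividing along $\ind^{\eth^{n-1}}$-Morley sequences over $M$ to $k$-dividing over $MI$, that $\varphi(x, b_\omega)$ divides over $MI$; this contradicts the failure of the tail to witness dividing. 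You need an argument of this sort, not an appeal to a nonexistent fact about arbitrary indiscernible sequences.
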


\begin{proof}
    It suffices to show that for $\varphi(x, y)$ a stable formula, if $\varphi(x, b)$ $n$-$\eth$-forks over $M$, so $n$-$\eth$-divides over $M$, then it $n+1$-$\eth$-divides over $M$. Let $\{b_{i}\}_{i < \omega}$ be an $\ind^{\eth^{n}}$-Morley sequence over $M$; by the definition of $n+1$-$\eth$-dividing, it suffices to show that $\{\varphi(x, b_{i})\}_{i < \omega}$ is $k$-inconsistent. Suppose otherwise. By compactness, extend $I=\{b_{i}\}_{i < \omega}$ to a $\ind^{\eth^{n}}$-Morley sequence $\{b_{i}\}_{i < \omega+\omega}$ over $M$. Then $\{b_{i}\}_{\omega \leq i < \omega + \omega}$ is a (nonforking) Morley sequence over $M I$ that does not witness dividing of $\varphi(x, b_{\omega})$. So it suffices to show that $\varphi(x, b_{\omega})$ divides over $MI$ anyway, contradicting the basic properties of stability. By the previous proposition, $\ind^{\eth^{n}}$ is symmetric, so $I \ind_{M}^{\eth^{n}} b_{\omega}$. Note that $\varphi(x, b_{\eta})$ divides over $M$. So by lowness, there is some $k$ so that each $\ind^{\eth^{n-1}}$-Morley sequence over $M$ starting with $b_{\omega}$ witnesses $k$-dividing of $\varphi(x, b_{\eta})$. Now for any formula in $\mathrm{tp}(b_{\omega}/MI)$, by $I \ind_{M}^{\eth^{n}} b_{\omega}$ there is an $\ind^{\eth^{n-1}}$-Morley sequence over $M$ starting with $b_{\omega}$, each term of which realizes this formula. In sum, for any formula in $\mathrm{tp}(b_{\omega}/MI)$, there is an $M$-indiscernible sequence of realizations of this formula witnessing the $k$-dividing of $\varphi(x, b_{\omega})$ over $M$. So by compactness, $\varphi(x, b_{\eta})$ $k$-divides over $MI$.
\end{proof}

In the next section, we will characterize $\ind^{\eth^{n}}$ in the classical examples of $\mathrm{NSOP}_{2^{n+1}+1}$ theories, for $n \geq 1$; it will be trivial in these examples, so satisfies the stable $n$-$\eth$-forking conjecture. Note that, if we start with the analogous stability assumption for Conant-independence (see \cite{GFA}), the proof of the previous propositions show that it coincides with $n$-$\eth$-independence for $n \geq 1$, and is symmetric. There are no known counterexamples to the ``stable Conant-forking conjecture" for $\mathrm{NSOP}_{4}$ theories. \footnote{Conant-independence is characterized for some classical examples of $\mathrm{NSOP}_{4}$ theories in \cite{GFA} For the Fraïssé-Hrushovski constructions of finite language, where the author has shown that Conant-independence coincides with $d$-independence, the proof of the stable forking conjecture for the simple case of these structures in \cite{Po03}, \cite{Ev02} should extend to the general case using this characterization.}

\section{Attainability/examples}
In $\mathrm{NSOP}_{1}$ theories, $\ind^{\eth^{n}}$ is just Kim-independence for $n \geq 1$. Moreover, in the examples of $\mathrm{NSOP}_{4}$ theories where Conant-independence has been characterized, it coincides with $\ind^{\eth^{n}}$ and is symmetric (see the end of the previous section). We now give some proper examples. These examples will show the attainability of $\mathrm{SOP}_{2^{n+1}}$ as the bound on the levels of the $\mathrm{SOP}_{k}$ hierarchy where $\ind^{\eth^{n}}$ can be symmetric or transitive.

\begin{example}\label{5-frcg} (Free roots of the complete graph) In \cite{CW04}, Casanovas and Wagner show that the theory $T_{n}^{-}$ of metric spaces valued in the set $\{0, \ldots, n\}$ has a model companion $T_{n}$. (More precisely, this is interdefinable with the theory introduced in \cite{CW04}, but we use the language of metric spaces.) They show that this theory is $\omega$-categorical, eliminates quantifiers, and has trivial algebraicity, and that it is $\mathrm{NSOP}$ but not simple. Later, Conant and Terry show in \cite{CT16} that $T_{n}$ is strictly $\mathrm{NSOP}_{n+1}$. We want to show the following
\begin{theorem}\label{5-existence}
In $T_{n}$, if $2^{k+1} \leq n$, $a \ind^{\eth^{k}}_{M} b$ if and only if $a \ind^{a}_{M} b =: a \cap b = M$. Therefore, there are $\mathrm{SOP}_{2^{k+1}}$ theories where $\ind^{\eth^{k}}$ is symmetric.

\end{theorem}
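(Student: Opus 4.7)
My plan is to establish both directions of the equivalence separately, handling the nontrivial direction by induction on $k$. For the forward direction $a \ind^{\eth^{k}}_{M} b \Rightarrow a \cap b = M$, I would argue contrapositively: if some $c \in (a \cap b) \setminus M$ exists, then trivial algebraicity of $T_{n}$ implies that in any $\ind^{\eth^{k-1}}$-Morley sequence $\{b_{i}\}_{i < \omega}$ over $M$ with $b_{0} = b$, the corresponding copies $c_{i} \in b_{i}$ are pairwise distinct (since $c \notin \mathrm{acl}(M) = M$). Hence $\{x = c_{i}\}_{i<\omega}$ is pairwise inconsistent, and the algebraic formula ``$x = c$'' in $\mathrm{tp}(a/Mb)$ witnesses that $a$ is not $\ind^{\eth^{k}}$-independent from $b$ over $M$.

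For the reverse direction $a \cap b = M \Rightarrow a \ind^{\eth^{k}}_{M} b$, I would induct on $k \geq 1$, using quantifier elimination in $T_{n}$ to reduce to distance formulas. Specifically, it suffices to show that for each $d' \geq 1$ in the image of $d(a, \cdot)$ on $b \setminus M$, the formula $d(x, b) = d'$ does not $k$-$\eth$-divide over $M$. This reduces to constructing a $\ind^{\eth^{k-1}}$-Morley sequence $\{b_{i}\}_{i < \omega}$ over $M$ with $b_{0} = b$ along which $\{d(x, b_{i}) = d'\}_{i}$ is consistent; by the triangle inequality, this amounts to the common pairwise distance $d_{k-1} := d(b_{i}, b_{j})$ satisfying $d_{k-1} \leq 2d'$. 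The inductive characterization of $\ind^{\eth^{k-1}}$-Morley sequences via their achievable distance spectrum, with the smallest achievable distance roughly halving at each level starting from $d_{0} = n$ for $\ind^{f}$-Morley sequences, combined with the hypothesis $2^{k+1} \leq n$, provides exactly the budget needed to drive the smallest achievable $d_{k-1}$ down to $\leq 2$, accommodating $d' = 1$ via the equidistant placement of $x$ from the $b_{i}$'s.

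The main obstacle will be the precise bookkeeping of achievable distances through the induction: one must verify that the distance formulas defining the Morley sequence's pairwise structure do not $(k-1)$-$\eth$-fork over $M$ (so that the claimed Morley sequences actually exist in the homogeneous model), and that the joint constraints across all pairs $(b_{i}, b_{j})$ remain simultaneously realizable in $T_{n}$. The structural hint in the paper's introduction, that ``distances in successive approximations of forking-independence tend away from the extremes'' in $T_{n}$, identifies the key phenomenon: as $k$ increases, the allowed distances in $\ind^{\eth^{k}}$-Morley sequences concentrate toward the middle of $\{0, \ldots, n\}$, and the exponential bound $2^{k+1} \leq n$ precisely counts the halving steps needed for the smallest achievable distance to shrink enough that the triangle-inequality obstruction to realizing $d(x, b) = 1$ disappears.
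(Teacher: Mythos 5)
The forward direction of your proposal (if $a\cap b \neq M$, the algebraic formula naming the shared point $k$-$\eth$-divides because $\ind^{\eth^{k-1}}\Rightarrow\ind^{a}$ forces the witnesses in any $\ind^{\eth^{k-1}}$-Morley sequence to be pairwise distinct) is correct and is essentially what the paper does. The reverse direction, however, has a genuine gap at its very foundation. You propose to induct on $k$, which in effect makes the inductive hypothesis "$\ind^{\eth^{k-1}} = \ind^{a}$ in $T_{n}$," so that $\ind^{\eth^{k-1}}$-Morley sequences are exactly the disjoint indiscernible sequences and you can control their distance spectrum. But that hypothesis is only true for $T_{n}$ when $2^{k} \geq n$, which is \emph{strictly stronger} than the hypothesis $2^{k+1} \geq n$ at stage $k$. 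Concretely: for $T_{8}$ and $k=2$, your inductive step needs $\ind^{\eth^{1}}=\ind^{a}$ in $T_{8}$; but by Theorem~\ref{5-symm}, symmetry of $\ind^{\eth^{1}}$ forces $\mathrm{NSOP}_{5}$, while $T_{8}$ is $\mathrm{SOP}_{8}$, so $\ind^{\eth^{1}}\neq\ind^{a}$ in $T_{8}$. Thus the inductive hypothesis is simply unavailable at every intermediate level $1 \leq i < k$, and no straightforward induction on $k$ can characterize the class of $\ind^{\eth^{k-1}}$-Morley sequences directly.

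The paper avoids this obstruction entirely. Rather than characterizing $\ind^{\eth^{i}}$ for intermediate $i$ (which is precisely the hard part), it introduces a concrete chain of auxiliary relations $\ind^{m}$, $0\leq m\leq k$, defined by the explicit distance constraints $\geq\min(2^{m},n^{*})$ and $\leq\max(n^{*},\,n-2^{m}+1)$, with $\ind^{0}=\ind^{a}$. Lemma~\ref{5-distance} shows that freely amalgamating two $\ind^{m}$-configurations over a common piece produces an $\ind^{m+1}$-configuration, so from $A\ind^{m}_{C}B$ one can build an $\ind^{m+1}$-Morley sequence starting with $B$ that is indiscernible over $A$ (which handles \emph{all} formulas of $\mathrm{tp}(A/CB)$ at once, not one distance formula at a time — your reduction to individual distance formulas is not a valid reduction without this, since one must realize the conjunction along a \emph{single} witnessing sequence, and must deal with forking rather than merely dividing at level $1$). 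Lemma~\ref{5-distanceforking} then gives $\ind^{k}\Rightarrow\ind^{f}=\ind^{\eth^{0}}$, using $2^{k}\geq n^{*}$; and a descending induction yields $\ind^{k-i}\Rightarrow\ind^{\eth^{i}}$ for each $i$, so $\ind^{a}=\ind^{0}\Rightarrow\ind^{\eth^{k}}$. The point is that only a one-sided inclusion of Morley sequence classes is needed, so no intermediate characterization is required.

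Two further issues. First, the distance bookkeeping: you describe ``the smallest achievable distance roughly halving at each level starting from $d_{0}=n$ for $\ind^{f}$-Morley sequences,'' and driving $d_{k-1}$ down to $\leq 2$. In the paper the picture is inverted: the lower bound \emph{doubles} ($1,2,4,\ldots,2^{k}$) while the upper bound shrinks ($n,n-1,n-3,\ldots$), so the $\ind^{m}$-ranges close in on the middle $\{n^{*},n^{*}+1\}$ as $m$ grows; and the $\ind^{\eth^{k-1}}$-Morley sequences one actually uses to witness, say, $d(x,b)=1$ are the $\ind^{1}$-Morley sequences with pairwise distance exactly $2$, not the ones near the middle. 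Your $\ind^{f}$ starting point should also be $n^{*}$ and $n^{*}+1$ (Lemma~\ref{5-distanceforking}), not $n$. Second, the direction of the arithmetic hypothesis: the theorem statement reads ``$2^{k+1}\leq n$,'' but the proof in the paper says ``suppose $2^{k+1}\geq n$,'' and only the latter is consistent with Theorem~\ref{5-symm} (if the characterization held for $n>2^{k+1}$, then $T_{n}$ would be an $\mathrm{SOP}_{2^{k+1}+1}$ theory with $\ind^{\eth^{k}}$ symmetric, a contradiction). The statement as printed has the inequality reversed; your own ``budget of halving steps'' reasoning, carried out carefully, would in fact produce $2^{k+1}\geq n$, not $\leq$, so it is worth noticing that tension rather than inheriting the typo.
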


We first show the following lemma:

\begin{lemma}\label{5-fa}
    Let $C, \subseteq A, B$ be metric spaces valued in $\{0, \ldots, n\}$. Then there is a metric space $D$ valued in $\{0, \ldots, n\}$ together with isometric embeddings $\iota_{A}: A \hookrightarrow D$ and $\iota_{B}: B \hookrightarrow D$ with $\iota_{A}|_{C} = \iota_{B}|_{C}$ and with, for $a \in A \backslash C$, $b \in B \backslash C$, and $d_{ab} =: d_{D}(\iota_{A}(a), \iota_{B}(b))$

   (a) $d_{ab} = m_{ab} =: \mathrm{min}_{c \in C}(d_{A}(a, c) + d_{B}(b, c))$ if $m_{ab} < n^{*} = :\lceil \frac{n}{2} \rceil$.

   (b) $d_{ab} = m^{ab} =: \mathrm{max}_{c \in C}(|d_{A}(a, c) - d_{B}(b, c)|)$ if  $m^{ab} > n^{*}$

   (c) Otherwise, $d_{ab} = n^{*}$.
\end{lemma}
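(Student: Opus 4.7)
The plan is to take $D$ to be the set-theoretic pushout $A \cup_C B$, with distance inherited from $d_A, d_B$ within $A$ and $B$ and with cross-distances $d_{ab}$ as prescribed; the maps $\iota_A, \iota_B$ are the canonical inclusions. I first need to verify the cases (a)--(c) are exhaustive and mutually exclusive and that $d_{ab} \in \{1, \ldots, n\}$. The key inequality is $m^{ab} \leq m_{ab}$: for any $c_1, c_2 \in C$, combining the triangle inequalities in $A$ and $B$ (through the identification of $C$) yields
\[ |d_A(a, c_1) - d_B(b, c_1)| \leq d_A(a, c_2) + d_B(b, c_2), \]
so taking the max on the left and the min on the right gives the claim. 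Then (a) implies $m^{ab} \leq m_{ab} < n^*$, excluding (b), and vice versa. Positivity of $d_{ab}$ in each case comes from $a, b \notin C$, and integrality and the bound $d_{ab} \leq n$ are immediate from the definitions.

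The substantive part is the triangle inequality. Triangles entirely inside $A$ or $B$ are inherited. For a triangle $(a, c, b)$ with $c \in C$, the three required inequalities reduce to the chain $m^{ab} \leq d_{ab} \leq m_{ab}$, which holds in all three cases by inspection. For a triangle $(a, a', b)$ with $a, a' \in A \setminus C$ and $b \in B \setminus C$, I would first show that both $a \mapsto m_{ab}$ and $a \mapsto m^{ab}$ are $d_A(a, a')$-Lipschitz (apply the triangle inequality in $A$ to each witness), and observe the unified formula $d_{ab} = \max\bigl(m^{ab},\, \min(m_{ab}, n^*)\bigr)$, which matches (a)--(c) in every case. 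Since $\max$ and $\min$ preserve the Lipschitz property, $d_{ab}$ is also $d_A(a, a')$-Lipschitz in $a$, yielding $|d_{ab} - d_{a'b}| \leq d_A(a, a')$ and so two of the three required triangle inequalities.

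The main obstacle is the remaining inequality $d_A(a, a') \leq d_{ab} + d_{a'b}$, which I would handle by splitting on which of (a)/(b)/(c) applies to each summand. If both fall under (a), choosing min-witnesses $c_1, c_2$ gives
\[ d_{ab} + d_{a'b} \geq d_A(a, c_1) + d_C(c_1, c_2) + d_A(a', c_2) \geq d_A(a, a'), \]
using the triangle inequality in $B$ to absorb $d_B(b, c_1) + d_B(b, c_2) \geq d_C(c_1, c_2)$ and then the triangle inequality in $A$. If both summands are $\geq n^*$ (any combination of (b), (c)), then $d_{ab} + d_{a'b} \geq 2n^* \geq n \geq d_A(a, a')$. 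In the mixed situation---say $d_{ab} = m_{ab}$ (case (a)) with min-witness $c$, and $d_{a'b} \geq n^*$---I branch on whether $d_A(a', c) - d_B(b, c) > n^*$: if so, then $d_{a'b} = m^{a'b} \geq d_A(a', c) - d_B(b, c)$, and the argument reduces exactly to the (a)/(a) case; otherwise $d_A(a', c) \leq n^* + d_B(b, c)$, whence $d_{ab} + d_{a'b} \geq m_{ab} + n^* \geq d_A(a, c) + d_A(a', c) \geq d_A(a, a')$. The symmetric triangles with two $B$-points are handled identically. Assembling these verifications yields a well-defined metric space $D$ valued in $\{0, \ldots, n\}$, completing the construction.
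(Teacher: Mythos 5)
Your proof is correct, and it takes a genuinely different and somewhat cleaner route than the paper's. Two real differences. First, to establish the key compatibility inequality $m^{ab} \leq m_{ab}$ you argue directly from the triangle inequality in $A$, $B$ and $C$ (for any $c_1, c_2 \in C$, $d_A(a,c_1) \leq d_A(a,c_2) + d_C(c_1,c_2)$ and $d_C(c_1,c_2) \leq d_B(b,c_1) + d_B(b,c_2)$, and symmetrically), whereas the paper invokes the strong amalgamation property of finite $\{0,\ldots,n\}$-valued metric spaces to produce an ambient metric on $\{a,b,c_*,c^*\}$ and derive a contradiction. Your version is more elementary and does not lean on quantifier elimination or model theory at all. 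Second, and more substantially, you observe the unified formula $d_{ab} = \max\bigl(m^{ab},\, \min(m_{ab}, n^*)\bigr)$, note that $a \mapsto m_{ab}$ and $a \mapsto m^{ab}$ are each $1$-Lipschitz with respect to $d_A$, and conclude that $d_{ab}$ is $1$-Lipschitz in $a$; this dispatches two of the three triangle inequalities for $(a,a',b)$ in one stroke, leaving only the lower bound $d_A(a,a') \leq d_{ab} + d_{a'b}$. The paper instead proves all three inequalities by an explicit case split on whether each $d(a_i,b)$ is below, equal to, or above $n^*$ (and appeals to the ambient amalgamated metric in one sub-case). Your remaining case split -- both in case (a), both $\geq n^*$, and one of each -- is shorter and again avoids amalgamation. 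A minor remark: in the mixed case you say the branch $d_A(a',c) - d_B(b,c) > n^*$ ``reduces exactly to the (a)/(a) case,'' but the computation there actually uses a single shared witness $c$ rather than two; still, the resulting chain $d_{ab} + d_{a'b} \geq d_A(a,c) + d_A(a',c) \geq d_A(a,a')$ is correct, so the phrasing is only a slight overstatement, not a gap.
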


\begin{proof}
    We may assume $A \cap B = C$ as sets. So it suffices to define a metric on $D= A \cup B$ extending that on $A$ and $B$ and satisfying (a), (b), (c). We claim that for all $a \in A \backslash C$, $b \in B \backslash C$, $m_{ab} \geq m^{ab}$, so only one of (a), (b), (c) may hold. Suppose otherwise. Then there are $c_{*}, c^{*} \in C$ with $d(a, c_{*}) + d(b, c_{*}) < |d(a, c^{*}) - d(b, c^{*})|$. But because $T_{n}$ has quantifier elimination and trivial algebraicity, the class of finite metric spaces valued in $\{0, \ldots, n\}$ has the strong amalgamation property, so there is a metric $d: \{a, b, c_{*}, c^{*}\}^{2} \to \{0 \ldots n\}$ extending the metric on $\{a, c_{*}, c^{*}\}$ and $\{b, c_{*}, c^{*}\}$. Then $d(a, b) \leq d(a, c_{*}) + d(b, c_{*}) < |d(a, c^{*}) - d(b, c^{*})| \leq d(a, b)$, a contradiction. So conditions (a), (b) and (c), together with the requirement of extending the metric on $A$ and $B$, give a well-defined function $d: D^{2} \to \{0 \ldots n\}$, and it remains to show this is a metric.

    Suppose first that $a  \in A \backslash C$, $b \in B \backslash C$, and $c \in C$. Then $d$ satisfies the triangle inequality on $\{a, b, c\}$ if $|d(a, c) - d(b, c)| \leq d(a, b) \leq d(a, c) + d(b, c)$. The first inequality is by (b), (c) and the second is by (a), (c). 

    Now suppose without loss of generality that $a_{1}, a_{2} \in A \backslash C $ and $b \in B \backslash C$. It remains to show the triangle inequality on $\{a_{1}, a_{2}, b\}$. By the definition of $n^{*}$, this is immediately the case if $d(a_{1}, b)= d(a_{2}, b) = n^{*}$. Otherwise, without loss of generality there are three cases, where $d(a_{1}, b) < n^{*}$ and $d(a_{2}, b) > n^{*}$, where $d(a_{1}, b) < n^{*}$ and $d(a_{2}, b) \leq n^{*}$, and where $d(a_{1}, b) > n^{*}$ and $d(a_{2}, b) \geq n^{*}$.

    In the case where $d(a_{1}, b) < n^{*}$ and $d(a_{2}, b) > n^{*}$, easily $d(a_{1}, b) \leq d(a_{2}, b) + d(a_{1}, a_{2})$. To show $d(a_{2}, b) \leq d(a_{1}, a_{2}) + d(a_{1}, b)$, by the strong amalgamation property, there is some metric $d'$ on $D$ extending $d$ on $A$ and $B$. So

    $$d(a_{2}, b) \leq d'(a_{2}, b) \leq d'(a_{1}, b) + d'(a_{1}, a_{2}) = d'(a_{1}, b) +d(a_{1}, a_{2}) \leq d(a_{1}, b) + d(a_{1}, a_{2})$$ Note that (b) is used for the first inequality, and (a) is used for the last inequality. Finally, we show $d(a_{1}, a_{2}) \leq d(a_{2}, b) + d(a_{1}, b) $, so $d(a_{2}, b) \geq d(a_{1}, a_{2}) -d(a_{1}, b)$. Note that $d(a_{1}, b) = d(a_{1}, c) + d(b, c) $ for some $c \in C$. Then by (b), $d(a_{2}, b) \geq d(a_{2}, c) - d(c, b) \geq (d(a_{1}, a_{2}) - d(a_{1}, c)) - d(c, b) =d(a_{1},a_{2}) -d(a_{1}, b) $.

    In the case where $d(a_{1}, b) < n^{*}$ and $d(a_{2}, b) \leq n^{*}$, we first show $d(a_{1}, b) \leq d(a_{2}, b) + d(a_{1}, a_{2})$ and $d(a_{2}, b) \leq d(a_{1}, b) + d(a_{1}, a_{2})$. If additionally, $d(a_{2}, b) = n^{*}$, then the first of these inequalities is immediate, so it suffices to prove the second inequality, as by symmetry we will have then proven the first inequality when both $d(a_{1}, b) < n^{*}$ and $d(a_{2}, b) < n^{*}$. By (a), there is some $c \in C$ with $d(a_{1}, b) = d(a_{1}, c) + d(b, c) $, so by (a), (c), $d(a_{2}, b) \leq d(a_{2}, c) + d(c, b) \leq d(a_{1}, a_{2}) + d(a_{1}, c) + d(c, b) = d(a_{1}, a_{2}) + d(a_{1}, b)$. Finally, we show in this case that $d(a_{1}, a_{2}) \leq d(a_{2}, b) + d(a_{1}, b)$. For any $c \in C$, if $d(a_{2}, b) = n^{*}$ this must be because $m^{a_{2}b} \leq n^{*}$, and if $d(a_{2}, b) < n^{*}$, then $d(a_{2}, b) = m_{ab} \geq m^{ab}$, so $d(a_{2}, b) \geq d(a_{2}, c) - d(c, b)$. So $d(a_{1}, a_{2}) \leq d(a_{2}, b) + d(a_{1}, b)$ follows exactly as in the case of $d(a_{1}, b) < n^{*}$ and $d(a_{2}, b) > n^{*}$.

    Finally, if $d(a_{1}, b) > n^{*}$ and  $d(a_{2}, b) \geq n^{*}$, then $d(a_{1}, a_{2}) \leq d(a_{1}, b)+ d(a_{2}, b)$ is immediate. We next show $d(a_{1}, b) \leq d(a_{2}, b) + d(a_{1}, a_{2})$ and $d(a_{2}, b) \leq d(a_{1}, b) + d(a_{1}, a_{2})$. If $d(a_{2}, b) = n^{*}$, then the second inequality is immediate, so it suffices to prove the first inequality, as by symmetry we will have then proven the second inequality when both $d(a_{1}, b) > n^{*}$ and $d(a_{2}, b) > n^{*}$. By (a) there will be some $c \in C$ with either $d(a_{1}, b) = d(a_{1}, c) -d(b, c) $ or $d(a_{1}, b) = d(b, c) - d(a_{1}, c)$. In the case of $d(a_{1}, b) = d(a_{1}, c) -d(b, c) $ by (b), (c), $d(a_{2}, b) \geq d(a_{2}, c) - d(b, c) \geq d(a_{1}, c) - d(a_{2}, a_{1}) - d(b, c)= d(a_{1}, b) - d(a_{2}, a_{1}) $, proving the first inequality in this case. In the case of $d(a_{1}, b) = d(b, c) - d(a_{1}, c)$, again by (b), (c), $d(a_{2}, b) \geq  d(b, c) - d(a_{2}, c) = d(a_{1}, c) + d(a_{1}, b) - d(a_{2}, c) = d(a_{1}, b) -(d(a_{2}, c) - d(a_{1}, c)) \geq d(a_{1}, b) - d(a_{1}, a_{2})$, proving the first inequality in the other case.

\end{proof}

\begin{remark}\label{5-nstar}
Say that in the statement of Lemma \ref{5-fa}, we let $n^{*} = \lceil \frac{n}{2} \rceil+1$ instead of $\lceil \frac{n}{2} \rceil$. Then the above proof works: $n^{*}$ can be any constant at least $\lceil \frac{n}{2} \rceil$, and the only place that $n^{*} \geq \lceil \frac{n}{2} \rceil$ is used is in the case where $a_{1}, a_{2} \in A \backslash C $, $b \in B \backslash C$ and  $d(a_{1}, b)= d(a_{2}, b) = n^{*}$. In the sequel, we will still let $n^{*}$ denote $\lceil \frac{n}{2} \rceil$

\end{remark}

\begin{definition}
Let $C \subseteq A, B$ be subspaces of some fixed metric space with values in $\{0, \ldots, n^{*}\}$, with $A \cap B = C$.

(1) $A$ and $B$ are \emph{freely amalgamated} over $C$ if the inclusions $\iota_{A}$ and $\iota_{B}$ satisfy the conclusion of Lemma \ref{5-fa}.

(2a) For $k \leq n$, $A$ and $B$ have \emph{distance $\leq n$} over $C$ if for $a \in A \backslash C$, $b \in B \backslash C$, $d(a, b) \leq \mathrm{max}(k, m^{ab})$.

(2b) For $ k \geq 1$, $A$ and $B$ have \emph{distance $\geq n$} over $C$ if for $a \in A \backslash C$, $b \in B \backslash C$, $d(a, b) \geq \mathrm{max}(k, m_{ab})$.

\end{definition}

\begin{lemma}\label{5-distance}
    Let $C \subseteq A, B, D$ be subsets of a fixed metric space, and $1 \leq k_{1} \leq n_{*} \leq n_{2} \leq n$. Suppose $A \cup D$ and $B \cup D$ are freely amalgamated over $D$, and both $A$ and $B$ have distance $\geq k_{1}$ and $\leq k_{2}$ from $D$ over $C$. Then $A$ has distance $\geq \mathrm{min}(n^{*}, 2k_{1})$ and $ \leq \mathrm{max}(k_{2} - k_{1}, n^{*})$ from $B$ over $C$, and moreover, $D$ has distance $\geq k_{1}$ and $\leq k_{2}$ from $A \cup B$ over $C$.
\end{lemma}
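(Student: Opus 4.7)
The plan is to prove the two conclusions separately, treating the second as essentially vacuous and reducing the first to a case analysis of the free amalgamation formula of Lemma \ref{5-fa}. For the second conclusion, any $x \in (A \cup B) \setminus C$ lies in $A \setminus C$ or in $B \setminus C$, so the distance bounds between $D$ and $x$ over $C$ are directly inherited from the hypothesis that $A, D$ (respectively $B, D$) have distance $\geq k_{1}$ and $\leq k_{2}$ over $C$.

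For the first conclusion, I would fix $a \in A \setminus C$ and $b \in B \setminus C$. First observe that the distance-$\geq k_{1}$ hypotheses force $A \cap D = B \cap D = C$, since an element in $A \cap D$ outside $C$ would be at distance $0$ from itself while being required to have distance $\geq k_{1} \geq 1$ over $C$. Hence $a \in A \setminus D$ and $b \in B \setminus D$, so $d(a,b)$ is determined by the free amalgamation of $A \cup D$ and $B \cup D$ over $D$ via Lemma \ref{5-fa}, that is, by comparing $\min_{d \in D}(d(a,d)+d(b,d))$ and $\max_{d \in D}|d(a,d)-d(b,d)|$ against $n^{*}$.

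The central calculation is to estimate these two quantities by partitioning $D = C \sqcup (D \setminus C)$. Contributions from $c \in C$ recover $m_{ab}$ and $m^{ab}$ computed over $C$. For $d \in D \setminus C$, the hypothesis delivers $d(a,d), d(b,d) \geq k_{1}$ and, in the non-degenerate range, $d(a,d), d(b,d) \leq k_{2}$; hence $d(a,d) + d(b,d) \geq 2k_{1}$ and $|d(a,d) - d(b,d)| \leq k_{2} - k_{1}$. Substituting these estimates into the cases (a), (b), (c) of Lemma \ref{5-fa} and taking the worst contribution over $C$ versus $D \setminus C$ yields the asserted bounds $d(a,b) \leq \max(k_{2}-k_{1}, n^{*}, m^{ab})$ and $d(a,b) \geq \min(2k_{1}, n^{*})$ composed with the $C$-quantity $m_{ab}$, matching the two target inequalities after unraveling the definitions.

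The main obstacle is reconciling this clean composition with the precise asymmetric form of the definitions of distance $\geq k$ and $\leq k$ over $C$, which involve a $\max$ with the ambient $C$-quantities. In particular, in case (a) the value $d(a,b) = \min_{d \in D}(d(a,d)+d(b,d))$ may be attained at an element of $D \setminus C$ rather than at $C$, so to pin the inequality against $m_{ab}^{C}$ one must invoke the triangle inequality $d(a,c) \leq d(a,d) + d(d,c)$ to propagate the hypothesis-given bounds from $D \setminus C$ back through $C$; an analogous triangle-inequality maneuver handles the case (b) upper bound against $m_{C}^{ab}$. Once this bookkeeping is carried out, the proof is a direct verification of the two inequalities across the three free-amalgamation cases, together with the degenerate check that no auxiliary hypothesis on $k_{1}, n^{*}, k_{2}$ is violated.
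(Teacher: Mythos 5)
Your plan matches the paper's: both reduce to the free amalgamation formula of Lemma \ref{5-fa}, extract a witness $d \in D$ realizing the relevant optimum, and — when that witness lies in $D \setminus C$ with the relevant quantity in the ``bad'' range — use the hypothesis that the $A$-to-$D$ (or $B$-to-$D$) distance is then forced through some $c \in C$, finishing by the triangle inequality. The clean per-point bounds you state for $d \in D \setminus C$ (e.g.\ $d(a,d) + d(b,d) \geq 2k_1$) are not literally true given the $C$-quantity caveat in the definitions of ``distance $\geq k$'' and ``distance $\leq k$,'' but your final paragraph correctly identifies this and the triangle-inequality repair, and the repaired argument is precisely the one in the paper.
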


\begin{proof}
    The second clause is obvious, so we prove the first; let $d$ be the metric. First we show that $A$ has distance $\geq \mathrm{min}(n^{*}, 2k_{1})$ from $B$ over $C$. Let $a \in A \backslash C$, $b \in B\backslash C$. It suffices to show that if $d(a, b)< \mathrm{min}(n^{*}, 2k_{1})$ then there is some $c \in C$ with $d(a, c) + d(c, b) \leq d(a, b)$. Because $d(a, b) < n^{*}$, there is some $d \in D$ with $d(a, d)+ d(b, d) =d(a, b) < 2k_{1}$. So either $d(a, d)$ or $d(b, d)$ must be less that $k_{1}$. Without loss of generality,  $d(a, d) < k_{1}$. Then because $D$ and $A$ have distance $\leq k_{1}$ over $C$, there is some $c \in C$ with $d(a, d) = d(a, c) + d(c, d)$. Then $d(a, b) = d(a, c) + d(c, d) + d(b, d) \geq d(a, c) + d(c, b)$. Next we show that $A$ has distance  $ \leq \mathrm{max}(k_{1} - k_{2}, n^{*})$ from $B$ over $C$.  Let $a \in A \backslash C$, $b \in B\backslash C$. It suffices to show that if $d(a, b) > \mathrm{max}(k_{2} - k_{1}, n^{*})$, there is some $c \in C$ with  $d(a, b) \leq |d(a, c) - d(b, c)|$. Because $d(a, b) > n^{*}$, without loss of generality there is some $d \in D$ with $d(a, d) - d(b, d) =d(a, b) > k_{2} - k_{1}$. So either $d(a, d) > k_{2}$ or $d(b, d) < k_{1}$. In the case where $d(a, d) > k_{2}$, since $A$ has distance $\geq k_{2}$ from $D$ over $C$, there is some $c \in C$ so that either $d(a, d) = d(a, c) - d(c,d)$ or $d(a, d) = d(c, d) - d(a, c)$. If $d(a, d) = d(a, c) - d(c,d)$, then $d(a, b) = d(a, c) - d(b,d) - d(c,d) \leq d(a, c) - d(b, c) $. If $d(a, d) = d(c, d) - d(a, c)$, then  $d(a, b) = d(c, d) - d(a, c) -d(b, d) \leq d(c, b) - d(a, c)$. In the case where $d(b, d) < k_{1}$, since $B$ has distance $\geq k_{1}$ from $D$ over $C$, there is some $c \in C$ with $d(b, d) =d(b, c) + d(c, d)$. Then $d(a, b) = d(a, d) - d(b, c) -d(c, d) \leq d(a, c) - d(b, c)$.
\end{proof}

\begin{lemma}\label{5-distanceforking}
Let $A$ and $B$ have distance $\geq n^{*}$ and $\leq n^{*} + 1$ over $C$. Then $A \ind_{C}^{f} B$.  
\end{lemma}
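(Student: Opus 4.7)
Plan: The plan is to prove $A \ind^f_C B$ by constructing a global $C$-invariant extension of $\mathrm{tp}(A/CB)$; since $C$-invariant global types do not fork over $C$, this suffices. First, I extract the geometric content of the hypothesis: for every $a \in A \setminus C$ and $b \in B \setminus C$, the triangle inequality through $C$ yields $d(a,b) \leq m_{ab}$, while ``distance $\geq n^*$'' forces $d(a,b) \geq \max(n^*, m_{ab}) \geq m_{ab}$, so $d(a,b) = m_{ab} \geq n^*$. The ``distance $\leq n^*+1$'' half of the hypothesis ensures these forced values $m_{ab}$ lie in $\{0,\dots,n\}$. Thus distances between $A$ and $B$ are precisely the shortest paths through $C$.

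I then define a global type $p(x) \in S(\mathfrak{M})$ extending $\mathrm{tp}(A/C)$ by specifying, for each $a_i \in A \setminus C$ and parameter $m \in \mathfrak{M}$, $d(x_i, m) := \min\bigl(n, m_{a_i m}\bigr)$, where $m_{a_i m} = \min_{c \in C}\bigl(d(a_i, c) + d(c, m)\bigr)$; the truncation at $n$ ensures the specified distance lies in $\{0,\dots,n\}$. (Distances among the $x_i$'s and from $x_i$ to $C$ are inherited from $\mathrm{tp}(A/C)$.) This specification is $C$-invariant because $m_{a_i m}$ depends only on $\mathrm{tp}(m/C)$, and it extends $\mathrm{tp}(A/CB)$ by the previous paragraph (the truncation being vacuous for $m \in B$). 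The technical core is verifying that $p$ satisfies all triangle inequalities. For the nontrivial case $\{x_i, m_1, m_2\}$: letting $c_k^* \in C$ achieve $m_{a_i m_k} = d(a_i, c_k^*) + d(c_k^*, m_k)$, the inequality $d(m_1, m_2) \leq d(x_i, m_1) + d(x_i, m_2)$ follows from $d(m_1, m_2) \leq d(m_1, c_1^*) + d(c_1^*, c_2^*) + d(c_2^*, m_2) \leq m_{a_i m_1} + m_{a_i m_2}$ via $d(c_1^*, c_2^*) \leq d(a_i, c_1^*) + d(a_i, c_2^*)$, together with $d(m_1, m_2) \leq n$ handling the truncated cases. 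The inequality $|d(x_i, m_1) - d(x_i, m_2)| \leq d(m_1, m_2)$ follows from the variational estimate $m_{a_i m_1} \leq d(a_i, c_2^*) + d(c_2^*, m_1) \leq m_{a_i m_2} + d(m_1, m_2)$, preserved by truncation. The triangle $\{x_i, x_j, m\}$ is analogous, using a minimizer for $m_{a_j m}$ and the triangle inequality within $\mathrm{tp}(A/C)$.

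With $p$ established as a consistent global $C$-invariant extension of $\mathrm{tp}(A/CB)$, the conclusion $A \ind^f_C B$ follows from the standard argument: if some $\varphi(x, b) \in \mathrm{tp}(A/CB)$ forked via $\varphi(x, b) \to \bigvee_k \psi_k(x, b_k)$ with each $\psi_k$ dividing over $C$, then some $\psi_k(x, b_k) \in p$ by completeness, and $C$-invariance would give $\psi_k(x, b_k^i) \in p$ for every $b_k^i$ in a $C$-indiscernible sequence extending $b_k$, contradicting the $N$-inconsistency witnessing dividing. The main obstacle I anticipate is the bookkeeping in the triangle inequality verifications, especially in checking that truncation interacts cleanly with both inequalities; but the underlying calculations are routine once the correct minimizers are chosen.
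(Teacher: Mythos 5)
Your proposal hinges on the claim that the hypothesis forces $d(a,b) = m_{ab}$ for all $a \in A\setminus C$, $b \in B \setminus C$, which you derive by reading Definition (2b) literally as $d(a,b) \geq \max(k, m_{ab})$. But this is a typo in the paper's definition: it should read $d(a,b) \geq \min(k, m_{ab})$, symmetrically with (2a). This is visible from the paper's own proof of Lemma \ref{5-distance}, which unpacks ``distance $\geq \min(n^*,2k_1)$'' as ``if $d(a,b) < \min(n^*,2k_1)$ then there is $c \in C$ with $d(a,c)+d(c,b) \leq d(a,b)$'' --- that is, $d(a,b) \geq \min(k,m_{ab})$ --- and from the fact that, with your reading, the free amalgam of Lemma \ref{5-fa} (which assigns $d(a,b) = n^*$ whenever $m_{ab} \geq n^* \geq m^{ab}$, typically with $m_{ab} > n^*$) would fail to have distance $\geq n^*$ over $C$, breaking the whole chain of arguments that feeds $\ind^a = \ind^0$ into this lemma via free amalgamation.

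Under the intended reading, the hypothesis only pins $d(a,b)$ to $m_{ab}$ when $d(a,b) < n^*$, and to $m^{ab}$ when $d(a,b) > n^*+1$; in the middle range $\{n^*, n^*+1\}$, the distance is \emph{not} determined by $\mathrm{tp}(a/C)$ and $\mathrm{tp}(b/C)$. This is fatal for your approach: one can have $b_1, b_2 \in B\setminus C$ with $b_1 \equiv_C b_2$ but $d(a,b_1) = n^*$ and $d(a,b_2) = n^*+1$, still satisfying the hypothesis. Then $\mathrm{tp}(A/CB)$ splits over $C$, so it has \emph{no} $C$-invariant global extension at all, and in particular your prescription $d(x_i,m) := \min(n, m_{a_i m})$, which depends only on $\mathrm{tp}(m/C)$, cannot extend $\mathrm{tp}(A/CB)$ ($m_{a_i b_1} = m_{a_i b_2}$, yet $d(a_i,b_1) \neq d(a_i,b_2)$). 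No $C$-invariant rule can be patched in. This is precisely why the paper takes the more hands-on route: it shows non-dividing by directly constructing, for an arbitrary $C$-indiscernible $\{B_i\}_{i<\omega}$ extending $B$, a consistent assignment of distances on $A \cup \bigcup_i B_i$ that copies $\mathrm{tp}(A/CB)$ along each $AB_i$ and fills the cross-terms with values in $\{n^*, n^*+1\}$ using Lemma \ref{5-fa} and Remark \ref{5-nstar}; the ``slack'' in the middle range is exactly what makes this possible even though the type splits. The forking part is then handled separately by right-extension (Claim \ref{5-dividingtoforking}) and saturation, not by invariance. Your triangle-inequality bookkeeping looks fine, but the object it establishes does not exist in the cases the lemma actually needs to cover.
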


\begin{proof}
    Again, let $d$ be the ambient metric. We first show that $\mathrm{tp}(A/CB)$ does not contain any formulas dividing over $C$. Let $I=\{B_{i}\}_{i < \omega}$ be a $C$-indiscernible sequencene with $B_{0} = B$. We can find a function $d^{*}: (A \cup I)^{2} \to \{0, \ldots n\}$ so that the bijection from $AB$ to $AB_{i}$ given by enumeration is an isomorphism, and so that $d^{*}$ agrees with $d$ on $I$. If we show this is a metric, then by quantifier elimination, there is $I' \equiv_{CB} I$ indiscernible over $A$, showing that $\mathrm{tp}(A/CB)$ does not contain any formulas dividing over $C$ since $I$ was arbitrary. Without loss of generality, it suffices to show the triangle inequality for $d^{*}$ on $\{b_{0}, b_{1}, a\}$ for $b_{0} \in B_{0}$, $b_{1} \in B_{1}$, $a \in A$, $b_{0} \neq b_{1}$. Suppose first that $d^{*}(b_{0}, a), d^{*}(b_{1}, a) \in \{n^{*}, n^{*}+1\}$. Then since $d^{*}(b_{0}, b_{1}) \geq 1$, the triangle inequality is immediate in all directions. Otherwise, without loss of generality, $d^{*}(b_{0}, a)$ is either $n^{*}$ or $n^{*}+1$, and $d^{*}(b_{1}, a)$ is either greater than $n^{*}+1$ and equal to $\mathrm{max}_{c \in C}(|d(b_{1}, c)-d(a,c)|)$ or less than $n^{*}$ and equal to $\mathrm{min}_{c \in C}(d(b_{1}, c)+d(a, c))$. In either case, by Lemma \ref{5-fa} and Remark \ref{5-nstar} applied to $C \subseteq A, I$, there is a metric on $A \cup I$ that agrees with $d^{*}$ on $\{a, b_{0}, b_{1}\}$, so $d^{*}$ satisfies the triangle inequality on $\{a, b_{0}, b_{1}\}$.

To show $A \ind^{f}_{C} B$, we need the following claim:

    \begin{claim}\label{5-dividingtoforking}
Let the relation $A \ind_{C} B$ be defined to hold when $A$ and $B$ have distance $\geq n^{*}$ and $\leq n^{*} + 1$ over $C$ has right extension: if $B \subseteq D$ there is $A'\equiv_{B} A$ with $A' \ind_{C} D$.
    \end{claim}

    \begin{proof}
        By Lemma \ref{5-fa}, we may find $A' \equiv_{B} A$ so that $AB$ is freely amalgamated with $D$ over $B$. So it suffices to show that if $A$ is freely amalgamated with $D$ over $B$ and $A \ind_{C} B$, then $A \ind_{C} D$. We first show that $A$ and $D$ have a distance of $\geq n^{*}$ over $C$. Suppose $a \in A \backslash C$, $d \in D \backslash C$, and $d(a, d) < n^{*}$. Then there is some $b \in B$ (and we can assume $b \neq d$) so that $d(a, d) = d(b, d) + d(b, a)$ Then $d(b, a) < n^{*}$ so there is some $c \in C$ so that $d(b, a) = d(b, c) + d(a, c)$. So $d(a, d) = d(b, d) + d(b, c) + d(a, c) \geq d(d, c) + d(a, c) $, as desired. We now show that $A$ and $D$ have a distance of $\leq n^{*} + 1$ over $C$. Suppose $a \in A \backslash C$, $d \in D \backslash C$, and $d(a, d) > n^{*} + 1$. Then there is some $b \in B$ (and we can assume $b \neq d$) so that either $d(a, d)= d(b, d) - d(b, a)$ or $d(a, d) = d(b, a) - d(b, d)$. Assume first that $d(a, d)= d(b, d) - d(b, a)$. Then as $d(a, d) > n^{*} + 1$, $d(b, a) < n^{*}$, so there is some $c \in C$ with $d(b, a) = d(b, c) + d(c, a)$. Then $d(a, d)= d(b, d) - d(b, c)-d(c, a) \leq d(d, c) - d(c, a)$. Now assume $d(a, d) = d(b, a) - d(b, d)$. Then as $d(a, d) > n^{*} + 1$, $d(b, a) > n^{*} +1$, so there is some $c \in C$ with either $d(b, a) = d(b ,c) - d(a, c)$ or $d(b, a) = d(a, c) - d(b, c)$. If $d(b, a) = d(b ,c) - d(a, c)$, then $d(a, d) = (d(b ,c)  - d(b, d)) - d(a, c)\leq d(d, c)-d(a, c)$. If $d(b, a) = d(a, c) - d(b, c)$, then $d(a, d) = d(a, c) -(d(b, c) + d(b, d)) \leq d(a, c)-d(d, c) $. Either way, this is as desired. 
    \end{proof}

Then if $A$ and $B$ have distance $\geq n^{*}$ and $\leq n^{*} + 1$ over $C$, we can assume $B$ is an $|C|^{+}$-saturated model by the claim, so the it follows as in the second paragraph of the proof of Lemma \ref{5-ext}.2 and the fact that $\mathrm{tp}(A/BC)$ does not divide over $C$ that $A \ind^{f}_{C} B$.

\end{proof}

Now suppose $2^{k+1} \geq n$. First, $\ind^{\eth^{k}}$ always implies $\ind^{a}$, as $\ind^{\eth^{0}}$ implies $\ind^{a}$ and if $\ind^{\eth^{i}}$ implies $\ind^{a}$ it is seen that $\ind^{\eth^{i+1}}$ implies $\ind^{a}$. We now show that $\ind^{a}$ implies $\ind^{\eth^{k}}$. Let $\ind^{0} = \ind^{a}$, and for $m \geq 1$, let $A\ind_{C}^{m} B$ indicate that $A$ has distance $\geq \mathrm{min}(2^{m}, n^{*})$ and $\leq \mathrm{max}(n^{*}, n - (\sum_{i=0}^{m-1} 2^{i}))=\mathrm{max}(n^{*}, n-(2^{m} -1))$ from $B$ over $C$. Then by repeated applications of Lemma \ref{5-distance}, where $k_{1} = 2^{i}$ and $k_{2} =  n - (\sum_{j=0}^{i-1} 2^{j})=n-(2^{i} -1)$ for $i \geq 0$, if $A \ind^{i}_{C} B$ there is an $\ind^{i+1}$-Morley sequence $\{B_{j}\}_{j < \omega}$ over $C$ with $B_{0} =B$ indiscernible over $A$. Moreover, $\ind^{k}$ implies $\ind^{f} = \ind^{\eth^{0}}$ and has right, and therefore left extension by Lemma \ref{5-distanceforking} and Claim \ref{5-dividingtoforking}, as $2^{k} \geq n^{*}$. So by the proof of Proposition \ref{5-fd}, and the fact that if $A \ind^{k-1}_{C} B$ there is an $\ind^{k}$-Morley sequence (and therefore an $\ind^{\eth^{0}}$-Morley sequence) $\{B_{j}\}_{j < \omega}$ over $C$ with $B_{0} =B$ indiscernible over $A$, $\ind^{k-1}$ implies $\ind^{\eth^{1}}$. Suppose inductively that $\ind^{k-i}$ implies $\ind^{\eth^{i}}$ for $1 \leq i \leq k$. Then by Proposition \ref{5-fd} and the fact that if $A \ind^{k - (i+1)}_{C} B$ there is an $\ind^{k-i}$-Morley sequence (and therefore an $\ind^{\eth^{i}}$-Morley sequence) $\{B_{j}\}_{j < \omega}$ over $C$ with $B_{0} =B$ indiscernible over $A$, $\ind^{k-(i+1)}$ implies $\ind^{\eth^{i+1}}$. So $\ind^{a} = \ind^{0}$ implies and is therefore equal to $\ind^{\eth^{k}}$.

This concludes the proof of Theorem \ref{5-existence}, and our discussion of the free roots of the complete graph.

\end{example}

\begin{example}\label{5-cyclefree}
    (Model companion of directed graphs with no directed cycles of length $\leq n$.) In Example 2.8.3 of \cite{She95}, Shelah shows that the theory of directed graphs with no directed cycles of length $\leq n$ has a strictly $\mathrm{NSOP}_{n+1}$ model companion $T_{n}$. (Note that if there is a directed cycle of length $\leq n$ that repeats vertices, there is one that does not repeat any vertices.) We show again that if $2^{k} \geq n$, then $\ind^{\eth^{k}}$ coincides with $\ind^{a}$. This will be for somewhat different reasons than Example \ref{5-frcg}, with successive approximations of forking-independence given by longer directed distances, rather than distances tending away from the extremes. We will use the same technique as in Theorem \ref{5-existence}, but the proof will be more intuitive.

    Shelah observes that $T_{n}$ has quantifier elimination in the language with binary relation symbols $R_{n}(x, y)$ indicating a path of length $n$ from $x$ to $y$. We refine this quantifier elimination. Let $n^{*} = \lceil \frac{n}{2} \rceil$. Then there must be a path of length $\leq n^{*}$ between any two nodes by existential closedness: if there is not one, one can be added on without creating any cycles of length $\leq n$. Moreover, by the lack of cycles, if there is a path of $\leq \frac{n}{2}$ there must be such a path in only one direction, by the absence of directed cycles of length $\leq n$. So if $n$ is even any pair of distinct nodes $a, b$ has one of $2n^{*}=n$ types, depending on the length of a minimal path, which will be at most $n^{*}$, and the direction of that path; call this the ``directed distance" between $a$ and $b$. If $n$ is odd, any pair of distinct nodes has one of $2n^{*}+1 = n$ types, depending on the length of a minimal path, which will be at most $n^{*}$, and the direction of that path if its length is $< n^{*}$; if the length of a minimal path is $n^{*}$, $a$ and $b$ will have an bi-directed distance of $n^{*}$, and $a$ and $b$ will otherwise have a directed distance of $< n^{*}$.

    The type of a set $S$ of distinct nodes will be determined by the distances between any two elements of $S$. We give necessary and sufficient criteria for an assignment of distances to pairs of nodes in $S$ to be consistent:

    (1) If a directed path of length $k \leq \frac{n}{2}$ is indicated by a chain of directed distances from $a$ and $b$ with total length $d$ (i.e. there are $a =a_{0}, a_{1}, \ldots, a_{m-1}, a_{m} = b $, with a directed distance of $d_{i}$ from $a_{i-1}$ to $a_{i}$ and $d=\sum_{i =1}^{n}d_{i} \leq \frac{n}{2}$), then the directed distance from $a$ to $b$ is at most $d$.

    (2) Chains of distances distances cannot indicate a directed cycle of length $\leq n$ (i.e. $a =a_{0}, a_{1}, \ldots, a_{m-1}, a_{m} = a $, with a directed (or bi-directed) distance of $a_{i}$ from $a_{i-1}$ to $a_{i}$ and $d=\sum_{i =1}^{n}d_{i} \leq n$).

    Clearly (1) and (2) are necessary. To show they are sufficient, assume distances are assigned to pairs in a set $S$; we will find a model $M$ containing $S$ realizing those assignments. For each pair $a, b \in S$, if a directed distance of $d$ (or an bi-directed distance of $d = n^{*}$) is assigned from $a$ to $b$, draw a path from $a$ to $b$ of length $d$ as well as a path in the opposite direction of length $n+1-d$. We show that $S$ together with these new vertices has no directed cycles of length $\leq n$. Suppose it contained such a cycle. That cycle can be partitioned into the paths added between elements of $S$. Suppose it contained the longer of the two paths, of length $n - d +1$, added from, say, $b$ to $a$. It cannot contain the shorter path, of length $d$, between $a$ and $b$, as then this cycle would be too long. But perhaps there another path that is even shorter than the directed distance $d$ from $a$ to $b$, formed out of paths of length less than $d \leq n^{*}$ between other vertices in $S$. This cannot happen, by (1). (This also handles the case where $a$ and $b$ have an bi-directed distance of $n^{*}$ when $n$ is odd.) Otherwise, all of the added paths between elements of $S$ that make up the cycle of length $\leq n$, are the paths of length $< n^{*}$ going in the direction of the directed distances. But this cannot happen, by (2). Call the union of $S$ with the additional paths $T$, which we have shown has no cycles of length $\leq n$, and find a model $M$ containing $T$. Then any two nodes in $S$ will have directed distance in $M$ at most the assigned distance $d$, as we added the shorter path going in that direction, but no less than the assigned distance, as we added the longer paths of length $n - d$ in the other direction, so a new path of length $< d$ in the direction of the directed distance would create a cycle of length $\leq n$.

    We next show, as an analogue of Lemma \ref{5-fa}, that sets can be ``freely amalgamated:" if $A \cap B = C$ with $A$ and $B$ given consistent assignments of distances agreeing on $C$, then there is an assignment of distances on $C$ agreeing with that on $A$ and $B$ and so that for $a \in A \backslash C$, $b \in B \backslash C$, the distance between $a$ and $b$ is the least total length of a chain of directed distances of total length $\leq n^{*}$ between $A$ and $B$ going through $C$ (i.e., without requiring a directed distance between a point of $A \backslash C$ and a point of $B \backslash C$ as one of the steps in the chain), going in the direction of that chain, and is otherwise of length $n^{*}$. We first show that the directed distances already within $A \cup B$ (i.e. not between a point of $A \backslash C$ and a point of $B \backslash C$) satisfy (1) and (2) (i.e., if $a \in A$ then $b \in A$, and if $a_{i} \in A$ then $a_{i+1} \in A$, and similarly for $B$.) For (1), we can assume without loss of generality that $a$ and $b$ are in $A$, and then the chain can be broken up into parts in $A$ and parts in $B$ each going between two nodes of $C$, but all of the parts in $B$ can be presumed to be in $C \subseteq A$, by (1) on $B$; having assumed this, we can then apply $1$ on $A$. For (2), a directed cycle of length $\leq n$ formed by chains of directed distances can similarly be broken up into parts in $A$ and parts in $B$, but only one of the parts, say in $A$ can have length $\geq n^{*}$, and all of the other parts, by (1) in $A$ and $B$, can be presumed in $C \subseteq A$, contradicting (2) in $A$. So the direction of the chain of shortest total length $\leq n^{*}$ between a point of $A \backslash C$ and a point of $B \backslash C$ going through $C$ is well-defined, by (2) for chains in $A \cup B$ going through $C$, so if such a chain exists we use it as the definition for the directed distance, and otherwise we choose a distance in an \textit{arbitrary} direction of size $n^{*}$.
    
    It remains to show (1) and (2) on the whole of $A \cup B$. For (1), if $a, b$ are not both in $A$ or both in $B$, then any directed distance of length $\leq n^{*}$ on a chain from $a$ to $b$ between a point of $A \backslash C$ and a point of $B \backslash C$ can be replaced with a chain going through $C$ of the same total length in the same direction, by construction. So any chain of directed distances between $a$ and $b$ can be replaced by a chain of directed distances of the same total length between $a$ and $b$ and in the same direction, going through $C$, so the directed distance between $a$ and $b$ will be as short or shorter in that direction, by definition. To complete (1) on the whole of $A \cup B$, if $a, b$ both belong to, say, $A$, then again by construction we can assume a chain of directed distances to be a chain going through $C$, and then use (1) for chains going through $C$. Finally, for (2), suppose first that the distances indicating a $\leq n$ cycle are either between two points of $A$ or $B$ or added between a point of $A \backslash C$ and a point of $B \backslash C$ because of a chain of the same length and the same direction going through $C$. Then those distances can be replaced with those chains, reducing (2) to the case of chains going through $C$, see above. Otherwise, one of the distances must be $n^{*}$ between a point $a \in A \backslash C$ and a point $b \in B \backslash C$, added because there is no chain going through $C$ of length $\leq n^{*}$. But the rest of the cycle must be a chain of distances of total length $\leq n^{*}$, each of which, being of length $< n^{*}$ can be replaced with chains going through $C$, and can thus be assumed a chain of length $< n^{*}$ going through $C$ from $a$ to $b$, a contradiction to the assumption on $a$ and $b$ that caused their  distance to have length $n^{*}$.

    Say that $C \subset A, B$, sitting in a fixed model, have distance $\geq k$ over $C$ for $k \leq n^{*}$ if $A \cap B = C$ and any $a \in A \backslash C$ and $b \in B \backslash C$ have distance in the same length and direction as the directed chain of distances of minimal total length through $C$ in $A \cup B$ if that length is $< k$, and otherwise have distance of length $\geq k$. We show an analogue of Lemma \ref{5-distance}: if $C \subseteq A, B, D$ with $A \cup D$ and $B \cup D$ freely amalgamated over $D$ as in the above discussion, and $A$ and $B$ both of distance $\geq k$ from $D$ over $C$, then (a) $A$ has distance $\geq \mathrm{min}(2k, n^{*})$ from $B$ over $C$ and (b) $A \cup B$ has distance $\geq k$ from $D$ over $C$. To show $A$, if a point $a \in A \backslash C$ and $b \in B \backslash C$ have distance of length $< \mathrm{min}(2k, n^{*})$ (say, going from $a$ to $b$), then there must be a chain of length $< \mathrm{min}(2k, n^{*})$ going through $D$. This chain can be broken alternately into parts entirely in $A$ and entirely in $B$. Using the fact that the chain has length $\leq n^{*}$, all of the parts except for the first and the last can be assumed entirely in $D$, and then replaced by (1) with a single distance between two points in $D$; that distance and the distance from the second point in $D$ to $B$ can then be replaced by a single distance, giving a chain of length $2$, going from $a$ to $d \in D$ and then to $B$. One of the two distances, say from $a$ to $d$, must have length $< k$, so there must be a chain of distances with the same total length and direction from $A$ to $D$, going through $C$ in $A \cup D$. So we can replace the distance from $a$ to $d$ with a distance from $a$ to $c$ followed by from $c$ to $d$, and can then replace the distance from $c$ to $d$ and from $d$ to $b$ with a single distance from $c$ to $b$, yielding a chain of equal or shorter length from $a$ to $b$ going through $C$, as desired. Meanwhile, (b) is obvious.

    Next, we show, if $A\ind_{C} B$ denotes a distance of $\geq n^{*}$ (so, $A$ and $B$ are freely amalgamated over $C$; note that this amalgamation is not unique), it implies forking-independence. We first show it implies dividing-independence: let $I=\{B_{i}\}_{i < \omega}$ with $B_{0} = B$ be a $C$-indiscernible sequence. Assign distances on $A \cup I$ to give $AB_{i}$ the same quantifier-free type as $AB$, and so the assignment agrees with the actual one on $I$. It suffices to show this satisfies (1) and (2). If $b \in B_{i} \backslash C$ and $a \in A \backslash C$, then if there is a chain of distances of length $\leq n^{*}$ in $A \cup I$ going through $C$ between $a$ and $b$, then by breaking this chain up into parts in $A$ and parts in $I$, and replacing the parts in $I$ with parts in $B_{i}$, there is a chain of distances of total length at least as short going in the same direction in $A \cup B_{i}$ through $C$. So the distance between $a$ and $b$ in our chosen assignment is the length and direction of the chain of least total length between $a$ and $b$ in $A \cup I$ going through $C$, using the actual assignments on $A$ and $I$. For the other pairs $a \in A \backslash C$ and $b \in I \backslash C$, a distance of length $ n^{*}$ is chosen. But in constructing the free amalgam, we showed that an arbitrary choice of direction for distances of length $n^{*}$  (though there will only be a choice when $n$ is even) is allowed for pairs with no path of length $\leq n^{*}$ going through the base. So our chosen assignment is one instance of the construction of the free amalgam of $A$ and $I$ over $C$, so in fact satisfies (1) and (2).

    It remains to show right-extension for $\ind$, which will hold if it is transitive, that is for $C \subseteq A$ and 
  $C \subseteq B \subseteq D$, $A \ind_{C} B$ and $A \ind_{B} D$ implies $A \ind_{C} D$. Suppose that $a \in A \backslash C$, $d \in D \backslash C$ and $a$ and $d$ have a distance $< n^{*}$, say, going from $a$ to $d$. Then there is, as above, a chain of distances going from, say $a$ to $b \in B$ to $d$, of the same total length. There is also a chain going from $a$ to $c \in C$ to $d $ of the same total length as the distance between $a$ and $d$, because that distance is also $< n^{*}$, so following that with the distance from $d \in D \subseteq B$ to $b \in B$, we get a chain going through $C$ of the same total length as the distance from $a$ to $b$, as desired.

  Now let $\ind^{0}= \ind^{a}$, $\ind^{i}$ denote a distance of $\mathrm{min}(2^{i}, n^{*})$ over the base. Exactly as in Example \ref{5-frcg}, we can then show that $\ind^{\eth^{k}} = \ind^{a} $ when $2^{k+1} \geq n$.

\end{example}

\begin{example}
(Model companion of undirected graphs without odd cycles of length $\leq n$, for $n$ odd).

In \cite{She95}, Shelah shows that the theory $T_{n}$ of undirected graphs without odd cycles of length $\leq n$, for $n$ odd, has a model companion and is strictly $\mathrm{NSOP}_{n+1}$. (This theory is further developed in \cite{CSS99}.) Again, this theory has quantifier elimination in the language with binary relations for paths of length $k$ (\cite{She95}). Instead of directed distances in the previous example, we now have the minimal length of a path between two vertices, an undirected distance which will be $\leq n^{*} = \lceil \frac{n}{2}\rceil$. A similar analysis to example \ref{5-cyclefree} will hold in this theory. Note, however, that because it is never true that $n = 2^{k}$, $T_{n}$ in this case cannot be used to witness that there are $\mathrm{SOP}_{2^{k}}$ theories where $\ind^{\eth^{k}}$ is symmetric.
\end{example}

\section{Bounds for symmetry and transitivity}

We now show that $\mathrm{SOP}_{2^{k+1}+1}$ is required for $\ind^{\eth^{k}}$ to be symmetric. (See Theorem 6.2 of \cite{GFA} for a related result on $\mathrm{NSOP}_{4}$.) From this and the previous section will follow the second clause of this theorem:

\begin{theorem}\label{5-symm}
    Assume $\ind^{\eth^{n}}$ is symmetric for $n \geq 1$. Then $T$ is $\mathrm{NSOP}_{2^{n+1}+1}$. Thus $2^{n+1}+1$ is the least $k$ so that every theory where $\ind^{\eth^{n}}$ is symmetric is $\mathrm{NSOP}_{k}$. 
\end{theorem}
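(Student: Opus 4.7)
The plan is to prove the contrapositive: if $R(x_1,x_2)$ witnesses $\mathrm{SOP}_{2^{n+1}+1}$ in $T$, then $\ind^{\eth^{n}}$ is not symmetric over some base model. The argument will parallel the $\mathrm{NSOP}_{4}$ analogue for Conant-independence (Theorem 6.2 of \cite{GFA}), but with a recursive, nested unfolding of Morley sequences that accounts for the exponential bound $2^{n+1}+1$.

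First I would fix a sufficiently saturated base model $M$ and, applying Ramsey's theorem together with compactness to the $\mathrm{SOP}_{2^{n+1}+1}$ witness, extract an $M$-indiscernible sequence $\{a_i\}_{i<\omega}$ with $\models R(a_i,a_j)$ for $i<j$. The heart of the argument is a structural lemma, proved by induction on $k$: any such $R$-increasing $M$-indiscernible sequence can be refined and regrouped, using the extension properties from Lemma \ref{5-ext} and the coincidence of $n$-$\eth$-forking and $n$-$\eth$-dividing from Proposition \ref{5-fd}, into an $\ind^{\eth^{k}}$-Morley sequence of tuples over $M$, where each tuple carries an $R$-chain whose length doubles with $k$. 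The inductive step amalgamates two copies of the structure at the previous level across an apex $R$-arrow, and it is precisely this binary branching at each level that will produce the $2^{n+1}$ in the final bound.

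With this lemma, I would extract at level $k=n-1$ an $\ind^{\eth^{n-1}}$-Morley sequence of tuples $\{\bar{a}_{j}\}_{j<\omega}$ together with an associated $R$-chain of length $2^{n}$ through each tuple, and select distinguished endpoints $a,b$ (the two extreme vertices of the chain in $\bar{a}_{0}$). On one hand, $b \ind^{\eth^{n}}_{M} a$: the shifts of the refined sequence furnish an $\ind^{\eth^{n-1}}$-Morley sequence through $a$ witnessing non-$n$-$\eth$-dividing of every formula in $\mathrm{tp}(b/Ma)$. On the other hand, $a \nind^{\eth^{n}}_{M} b$: the formula $\psi(x,b)$ asserting the existence of an $R$-chain of length $2^{n}$ from $x$ to $b$ must $n$-$\eth$-divide over $M$, because any $\ind^{\eth^{n-1}}$-Morley sequence $\{b_{j}\}$ through $b$ unfolds, by another application of the inductive lemma, into layered $R$-structures; a single $x$ realizing $\{\psi(x,b_{j})\}_{j}$ would combine with these layers to close a directed $R$-cycle of length $2\cdot 2^{n}+1=2^{n+1}+1$, contradicting the no-cycles property of $R$.

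The main obstacle is the inductive lemma itself — verifying that the regrouping at each step yields a genuine $\ind^{\eth^{k}}$-Morley sequence, and that the forced $R$-chain structure is rigid enough to appear in \emph{every} $\ind^{\eth^{n-1}}$-Morley sequence through $b$, not only in the one we explicitly built. This rigidity will use the symmetry hypothesis on $\ind^{\eth^{n}}$ at an auxiliary step to reverse an independence relation and splice the two halves of the eventual cycle together; it is this internal use of symmetry, combined with $R$'s avoidance of $(2^{n+1}+1)$-cycles, that will produce the contradiction and complete the contrapositive.
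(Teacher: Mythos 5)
Your high-level plan is right: prove the contrapositive, extract an $R$-increasing indiscernible sequence, and use a doubling induction tied to the recursive definition of $\ind^{\eth^{n}}$ to produce the exponent $2^{n+1}+1$. But the central lemma you describe does not, as stated, do the work you assign to it, and this is a genuine gap, not a detail.

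Your ``structural lemma'' is phrased as a \emph{construction}: regroup the given $R$-increasing sequence into an $\ind^{\eth^{k}}$-Morley sequence carrying $R$-chains of doubling length. That construction can reasonably give you the direction $\bar{a}\ind^{\eth^{n}}_{M}\bar{b}$ (or rather the independence you ultimately want to exhibit on one side). But for the dependence $\bar{b}\nind^{\eth^{n}}_{M}\bar{a}$ you must show that a certain formula $n$-$\eth$-divides, which is a \emph{universal} statement: every $\ind^{\eth^{n-1}}$-Morley sequence over $M$ starting at $\bar{a}$ must witness inconsistency. You try to cover this by ``another application of the inductive lemma'' to claim that any such Morley sequence ``unfolds into layered $R$-structures,'' but an arbitrary $\ind^{\eth^{n-1}}$-Morley sequence need have no $R$-structure at all, and a lemma about constructing one specific sequence cannot be applied to analyze all of them. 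This is a type mismatch, and you flag it yourself as ``the main obstacle'' without closing it. Appealing to the symmetry hypothesis at this point does not help: the paper's proof of the corresponding dividing lemma (Lemma~\ref{5-cycledividing}) is unconditional and never invokes symmetry; symmetry is only contradicted at the very end, by exhibiting both $\bar{b}\nind^{\eth^{n}}_{M}\bar{a}\bar{c}$ and $\bar{a}\bar{c}\ind^{\eth^{n}}_{M}\bar{b}$.

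What actually closes the gap is a differently shaped induction. The paper fixes, for each $0\le k\le n$, a formula $R^{k}(y_{0},\ldots,y_{2^{k}-1},c_{0},\ldots,c_{2^{k}})$ coding an $R$-chain of total length $2^{n+1}$ threaded through the $c_{i}$'s with segments of length $2^{n-k}$, and proves by induction on $k$ that $R^{k}$ $k$-$\eth$-divides over a Skolem hull $M$. The base case $k=0$ is where the $(2^{n+1}+1)$-cycle contradiction appears: a common realization of $\{R_{2^{n}}(c_{2i},y)\wedge R_{2^{n}}(y,c_{2i+1})\}_{i<\omega}$ closes a cycle of length $2^{n}+1+2^{n}=2^{n+1}+1$. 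The inductive step is a contrapositive trick internal to the definitions: if $R^{k+1}$ had a common realization along a candidate $\ind^{\eth^{k}}$-Morley sequence, then splicing that realization with two consecutive terms of the sequence produces an instance of $R^{k}$ in a type over $M$ and the later term, which $k$-$\eth$-divides by the inductive hypothesis, so the sequence was not $\ind^{\eth^{k}}$-Morley after all. Your proposal contains neither the careful choice of the nested formulas $R^{k}$ nor this ``realization reveals forking at the lower level'' mechanism, and without them the claim that $\psi(x,b)$ $n$-$\eth$-divides is unsupported. You would also need the further arrangement (the minimality argument for $n\ge 2$, with a separate finite-satisfiability argument for $n=1$) to turn the raw instance of $n$-$\eth$-dividing between the odd- and even-indexed tuples into a clean violation of symmetry; choosing only the two extreme vertices as $a,b$ skips this entirely.
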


We state the construction; fix a Skolemization of $T$. Suppose $T$ is $\mathrm{SOP}_{2^{n+1}+1}$; we show $\ind^{\eth^{n}}$ is asymmetric. Let $R(x, y)$ witness this; then there is an indiscernible sequence $\{c^{*}_{i}\}_{i \in 3\mathbb{Z}}$ so that $\models R(c_{i}, c_{j})$ for $i < j$, but there are no $(2^{n+1}+1)$-cycles. Let $M = \mathrm{dcl}_{\mathrm{Sk}}(\{c^{*}_{i}\}_{i \in \mathbb{Z}} \cup \{c^{*}_{2\mathbb{Z}+i}\}_{i \in \mathbb{Z}})$, and let $c_{i} = c^{*}_{\mathbb{Z}+i}$ for $i \in \mathbb{Z}$. For $k \geq 1$, let $R_{k}(x, y) =: \exists x_{0} \ldots x_{n-2} R(x, x_{0}) \wedge \bigwedge^{n-3}_{i=0}R(x_{i}, x_{i+1}) \wedge R(x_{n-2}, y)$ (so $R_{1}(x, y) =: R(x, y)$ and $R_{2}(x, y) =: \exists x_{0} R(x, x_{0}) \wedge R(x_{0}, y)$. We find instances of $k$-$\eth$-dividing:

\begin{lemma}\label{5-cycledividing}
    Let $0 \leq k \leq n$. Then

    $$R^{k}(y_{0}, \ldots, y_{2^{k}-1}, c_{0}, \ldots, c_{2^{k}})=: \bigwedge^{2^{k}-1}_{i=0} R_{2^{n-k}}(c_{i}, y_{i}) \wedge R_{2^{n-k}}(y_{i}, c_{i+1}) $$ $k$-$\eth$-divides (and therefore $k$-$\eth$-forks) over $M$.
\end{lemma}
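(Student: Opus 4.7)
The plan is to prove the claim by induction on $k \in \{0, 1, \ldots, n\}$, in every case supposing the relevant $\eth$-dividing fails and extracting a simple directed $R$-cycle of length $2^{n+1}+1$, contradicting the $\mathrm{SOP}_{2^{n+1}+1}$-witnessing choice of $R$. That $n$-$\eth$-dividing implies $n$-$\eth$-forking is free from Proposition \ref{5-fd}.

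For the base case $k = 0$, the formula is $R^0(y_0, c_0, c_1) = R_{2^n}(c_0, y_0) \wedge R_{2^n}(y_0, c_1)$. I would exhibit the $M$-indiscernible sequence $\{(c_{2j}, c_{2j+1})\}_{j < \omega}$ starting with $(c_0, c_1)$; its $M$-indiscernibility follows from the choice of $M$ as the Skolem hull of the ``outer'' part of the ambient indiscernible sequence. If this sequence were consistent, a common $y_0$ would satisfy $R_{2^n}(c_{2j}, y_0) \wedge R_{2^n}(y_0, c_{2j+1})$ for every $j$. Taking $j = 1$ and $j = 0$, the $R$-path $c_2 \to y_0$ (length $2^n$) concatenated with $y_0 \to c_1$ (length $2^n$) is a directed $R$-path from $c_2$ to $c_1$ of length $2^{n+1}$, and the single $R$-edge $c_1 \to c_2$ (present since $1 < 2$ in the indiscernible order) closes this into a directed cycle of length $2^{n+1}+1$. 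A standard stretching / genericity argument on the internal vertices of the $R_{2^n}$-paths yields $2^{n+1}+1$ pairwise distinct vertices, giving the desired contradiction.

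For the inductive step $k \geq 1$, fix any $\ind^{\eth^{k-1}}$-Morley sequence $\{\bar{c}^{(j)}\}_{j < \omega}$ starting with $\bar{c}^{(0)} = (c_0, \ldots, c_{2^k})$, and suppose for contradiction that $\{R^k(\bar{y}, \bar{c}^{(j)})\}_j$ is consistent with common realization $\bar{y} = (y_0, \ldots, y_{2^k-1})$. Each $y_i$ then admits an incoming $R_{2^{n-k}}$-path from every $c_i^{(j)}$ and an outgoing $R_{2^{n-k}}$-path to every $c_{i+1}^{(j)}$. By alternately bouncing between two well-chosen tuples through the $y_i$'s, I would construct a directed $R$-path of total length $2^k \cdot 2^{n-k+1} = 2^{n+1}$ whose endpoints are $c_0^{(j_0)}$ and $c_{2^k}^{(j_1)}$ for suitable indices $j_0, j_1$, then close up with a single $R$-edge between these endpoints to obtain the desired $(2^{n+1}+1)$-cycle.

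I expect the main obstacle to be the closure step in the inductive case: one must arrange that the closure $R$-edge exists in the correct direction, which requires positioning two tuples of the Morley sequence appropriately in the ambient $R$-order determined by the indiscernible sequence and $M$. This should be accomplished by invoking the extension/invariance properties of $\ind^{\eth^{k-1}}$ established in Lemma \ref{5-ext} and Proposition \ref{5-fd}, together with the deliberate placement of the $c_i$'s inside the larger indiscernible sequence whose ``outer'' portion has been absorbed into $M$. Simultaneously, one must ensure all $2^{n+1}+1$ vertices of the cycle are pairwise distinct, which is handled by a standard Ramsey-type stretching of the internal vertices of the $R_{2^{n-k}}$-paths using indiscernibility.
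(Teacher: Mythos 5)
Your base case ($k=0$) matches the paper's: the $M$-indiscernible sequence $\{(c_{2i}, c_{2i+1})\}_{i<\omega}$, a common realization $y$, and the edge $R(c_1, c_2)$ close into a $(2^{n+1}+1)$-cycle. (Distinctness of vertices is not needed for the definition of $\mathrm{SOP}_m$ in use here.)

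The inductive step has a genuine gap, and it is precisely the obstacle you flag. For $k\geq 1$, $(k+1)$-$\eth$-dividing must be verified against \emph{every} $\ind^{\eth^{k}}$-Morley sequence starting at $(c_0,\ldots,c_{2^{k+1}})$; such a sequence is only $M$-indiscernible with the $\ind^{\eth^{k}}$ spread condition, and there is no reason for any $R$-edge to hold between its terms. Invariance or extension properties of $\ind^{\eth^{k-1}}$ cannot manufacture one, and there is no ambient $R$-order for an arbitrary Morley sequence to sit inside. So the cycle-closing strategy that works at $k=0$ (where the witnessing sequence can be chosen to live inside the original indiscernible $R$-chain) does not generalize.

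The paper's inductive step avoids cycles altogether. Given a common realization $\bar{c}'$ of $R^{k+1}$ against the $j=0$ and $j=1$ terms of a putative $\ind^{\eth^{k}}$-Morley sequence $\{\bar{c}^{(j)}\}_j$, one stitches the four $R_{2^{n-(k+1)}}$-paths through $c'_{2i}$ and $c'_{2i+1}$ to get
$R_{2^{n-k}}(c^{1}_{2i}, c^{0}_{2i+1}) \wedge R_{2^{n-k}}(c^{0}_{2i+1}, c^{1}_{2(i+1)})$
for $0\leq i<2^{k}$. That is, the odd-indexed entries of $\bar{c}^{(0)}$ serve as the $y$-witnesses realizing $R^{k}$ over the even-indexed entries of $\bar{c}^{(1)}$, which have the same $M$-type as $(c_0,\ldots,c_{2^{k}})$. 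The induction hypothesis then gives $\bar{c}^{(0)} \nind^{\eth^{k}}_M \bar{c}^{(1)}$, so the sequence was not $\ind^{\eth^{k}}$-Morley. The contradiction comes from breaking the Morley condition, not from exhibiting a $(2^{n+1}+1)$-cycle; you should replace your inductive step with this reduction.
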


\begin{proof}
    By induction on $k$. For $k=0$, we show that $R_{2^{n}}(c_{0}, y) \wedge R_{2^{n}}(y, c_{1})$ divides over $M$, specifically by $\{c_{2i}c_{2i+1}\}_{i < \omega}$. That is, $\{R_{2^{n}}(c_{2i}, y) \wedge R_{2^{n}}(y, c_{2i+1})\}_{i < \omega}$ is inconsistent; suppose it is consistent, realized by $c$. Then $\models R_{2^{n}}(c, c_{1}) \wedge R(c_{1}, c_{2}) \wedge R_{2^{n}}(c_{2}, c)$. So there is a $(2^{n+1} +1)$-cycle, a contradiction.

    Now suppose the statement holds for $0 \leq k \leq n-1$; we prove it is true for $k+1$. Let $\{c^{i}_{j}\}^{i<\omega}_{0 \leq j \leq 2^{k+1}}$ be a sequence with $c^{0}_{j}=c_{j}$ so that $\{R^{k+1}(y_{0}, \ldots, y_{2^{k+1}-1}, c^{i}_{0}, \ldots, c^{i}_{2^{k+1}})\}_{i < \omega}$ is realized by $c'_{0}, \ldots, c'_{2^{k+1}-1}$; by the definition of $(k+1)$-$\eth$-dividing, it suffices to show that $\{c^{i}_{j}\}^{i < \omega}_{0 \leq j \leq 2^{k+1}}$ is not an $\ind^{\eth^{k}}$-Morley sequence over $M$. For $0 \leq i \leq 2^{k}-1$, in particular 
    
    $$\models R_{2^{n-(k+1)}}(c_{2i}^{1}, c'_{2i}) \wedge R_{2^{n-(k+1)}}(c'_{2i}, c^{0}_{2i+1}) \wedge R_{2^{n-(k+1)}}(c^{0}_{2i+1}, c'_{2i+1}) \wedge  R_{2^{n-(k+1)}}(c'_{2i+1}, c^{1}_{2(i+1)})$$ It follows that $\models R_{2^{n-k}}(c^{1}_{2i}, c^{0}_{2i +1}) \wedge R_{2^{n-k}}(c^{0}_{2i +1}, c^{1}_{2(i+1)})  $ for $0 \leq i \leq 2^{k} -1$. Therefore,  
    
    $$R^{k}(y_{0}, \ldots, y_{2^{k}-1}, c_{0}^{1}, \ldots, c^{1}_{2i}, \ldots, c^{1}_{2^{k+1}}) \in \mathrm{tp}(c^{0}_{1} \ldots c^{0}_{2i+1} \ldots c^{0}_{2^{k+1}-1} /Mc_{0}^{1} \ldots c^{1}_{2i} \ldots c^{1}_{2^{k+1}})$$ Because $c^{1}_{0} \ldots c^{1}_{2i} \ldots c^{1}_{2^{k+1}} \equiv_{M} c^{0}_{0} \ldots c^{0}_{2i} \ldots c^{0}_{2^{k+1}}=c_{0} \ldots c_{2i} \ldots c_{2^{k+1}} \equiv_{M} c_{0} \ldots c_{2^{k}}$, and $R^{k}(y_{0}, \ldots, y_{2^{k}-1}, c_{0}, \ldots, c_{2^{k}})$ $k$-$\eth$-divides over $M$ by the induction hypothesis, $R^{k}(y_{0}, \ldots, y_{2^{k}-1}, c_{0}^{1}, \ldots, c^{1}_{2i}, \ldots, c^{1}_{2^{k+1}})$ $k$-$\eth$-divides over $M$, so 
    
    $$c^{0}_{1} \ldots c^{0}_{2i+1} \ldots c^{0}_{2^{k+1}-1}\nind_{M}^{\eth^{k}}c_{0}^{1}, \ldots c^{1}_{2i} \ldots c^{1}_{2^{k+1}}$$ and therefore, 
    
    $$c^{0}_{0} \ldots c^{0}_{2^{k+1}}\nind_{M}^{\eth^{k}}c_{0}^{1}\ldots c^{1}_{2^{k+1}}$$ So $\{c^{i}_{j}\}^{i < \omega}_{0 \leq j \leq 2^{k+1}}$ is not an $\ind^{\eth^{k}}$-Morley sequence over $M$.

\end{proof}

It follows from the case $k=n$ of Lemma \ref{5-cycledividing} and an automorphism that 

$$\{c_{2i-1}\}_{-2^{n-1} < i \leq 2^{n-1}}\nind^{\eth^{n}}_{M}\{c_{2i}\}_{-2^{n-1}\leq i \leq 2^{n-1}}$$ So we have obtained an instance of $n$-$\eth$ independence. When $n=1$, so $c_{-1}c_{1} \nind^{\eth^{1}}_{M}c_{-2}c_{0}c_{2}$, this is one direction of the asymmetry: $ c_{-2}c_{0}c_{2}\ind^{\eth^{1}}_{M}c_{-1}c_{1}$. To show this, we extend $\{c_{i}\}_{i \in \mathbb{Z}}$ to an $M$-indiscernible sequence $\{c_{i}\}_{i \in \mathbb{Q}}$. Then by construction, $\{c_{-(1+i)}c_{1+i}\}_{i \in [0, 1)  }$ (note $c_{-(1+0)}c_{1+0}=c_{-1}c_{1}$) is a finitely satisfiable Morley sequence over $M$, indiscernible over $Mc_{-2}c_{0}c_{2}$. So $c_{-2}c_{0} c_{2}\ind^{\eth^{1}}_{M}c_{-1}c_{1}$ follows from the following fact, which is immediate from Fact 6.1 of \cite{GFA} (this is just a standard application of left extension for finite satisfiability, as in the proof of Proposition \ref{5-fd}):

\begin{fact}\label{5-coheirs}
Let $\{b_{i}\}_{i < \omega}$ be a finitely satisfiable Morley sequence over $M$ with $b_{0} = b$ so that $\{\varphi(x, b_{i})\}_{i < \omega}$ is consistent. Then $\varphi(x, b)$ does not $1$-$\eth$-fork over $M$.

\end{fact}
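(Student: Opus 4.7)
\textbf{Proof plan for Fact \ref{5-coheirs}.}

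The plan is to mimic the argument used in the proof of Proposition \ref{5-fd}, replacing the use of left extension for $\ind^{\eth^{n-1}}$ with left extension for finite satisfiability $\ind^{u}$, which is available unconditionally. Suppose toward a contradiction that $\varphi(x,b)$ $1$-$\eth$-forks over $M$. Then there exist formulas $\varphi_{j}(x, c^{j})$, $1 \leq j \leq N$, each $1$-$\eth$-dividing over $M$, with $\models \varphi(x,b) \to \bigvee_{j=1}^{N} \varphi_{j}(x, c^{j})$. My goal is to combine the given finitely satisfiable Morley sequence $\{b_{i}\}_{i<\omega}$ with $M$-conjugates of the $c^{j}$'s to produce, for each $j$, a nonforking Morley sequence that witnesses inconsistency of $\varphi_{j}(x, c^{j})$.

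First I would fix a global $M$-finitely satisfiable type $q(y) \supseteq \mathrm{tp}(b/M)$ whose iterated Morley sequence realizes the type of $\{b_{i}\}$ over $M$; by replacing $\{b_{i}\}$ with such a Morley realization of $q$ (using an $M$-automorphism) I may assume this is literally how $\{b_{i}\}$ is generated. Then, using left extension for $\ind^{u}$ (inductively over $i$, via a global $M$-finitely satisfiable extension of $\mathrm{tp}(c^{1},\ldots, c^{N} / Mb)$), I build a finitely satisfiable Morley sequence
\[
\{b_{i}\, c^{1}_{i} \cdots c^{N}_{i}\}_{i<\omega}
\]
over $M$ starting with $b\, c^{1}\cdots c^{N}$, whose projection onto the $b$-coordinates is (indiscernibly equivalent over $M$, hence WLOG equal to) the original sequence $\{b_{i}\}$. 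Since finite satisfiability implies nonforking, each projected sequence $\{c^{j}_{i}\}_{i<\omega}$ is then a nonforking (i.e.\ $\ind^{\eth^{0}}$) Morley sequence over $M$ starting with $c^{j}$.

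Next, let $a$ realize the partial type $\{\varphi(x, b_{i})\}_{i<\omega}$, which remains consistent by assumption. For each $i$, the implication $\models \varphi(a, b_{i}) \to \bigvee_{j=1}^{N} \varphi_{j}(a, c^{j}_{i})$ forces some $j(i) \in \{1,\ldots,N\}$ with $\models \varphi_{j(i)}(a, c^{j(i)}_{i})$. By the pigeonhole principle, there exists a fixed $j$ and an infinite set $S \subseteq \omega$ with $\models \varphi_{j}(a, c^{j}_{i})$ for all $i \in S$. Passing to this subsequence (which, by indiscernibility, is again a nonforking Morley sequence over $M$ starting with $c^{j}$), the family $\{\varphi_{j}(x, c^{j}_{i})\}_{i \in S}$ is consistent, realized by $a$. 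But this directly contradicts the fact that $\varphi_{j}(x, c^{j})$ $1$-$\eth$-divides over $M$, which by definition requires inconsistency along \emph{every} $\ind^{\eth^{0}}$-Morley sequence starting with $c^{j}$.

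The main obstacle is the bookkeeping in the first step: one must produce the combined sequence $\{b_{i} c^{1}_{i} \cdots c^{N}_{i}\}$ without disturbing the $b$-projection, so that the witness $a$ to consistency of $\{\varphi(x, b_{i})\}$ can still be used against the formulas $\varphi_{j}$. This is exactly the role of left extension for finite satisfiability, which allows one to thread the $c^{j}_{i}$'s through the existing $\{b_{i}\}$ rather than starting from scratch. Once this coupling is in place, the pigeonhole step and the contradiction with $1$-$\eth$-dividing of $\varphi_{j}(x, c^{j})$ are routine, paralleling the end of the proof of Proposition \ref{5-fd}.
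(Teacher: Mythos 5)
Your proof is correct and takes essentially the same approach the paper gestures at: it instantiates the combining-and-pigeonhole argument of Proposition~\ref{5-fd} using left extension for finite satisfiability in place of left extension for $\ind^{\eth^{n-1}}$. One small imprecision: $\mathrm{tp}(c^1,\ldots,c^N/Mb)$ need not itself be $M$-finitely satisfiable (it has the parameter $b$ outside $M$), so ``a global $M$-finitely satisfiable extension of $\mathrm{tp}(c^1,\ldots,c^N/Mb)$'' is not the right phrase; what you want is a global $M$-finitely satisfiable type $q'(y_b,y_c)$ extending both $q(y_b)$ and $\mathrm{tp}(bc^1\cdots c^N/M)$, whose existence is a routine compactness/elementarity check, after which the projection of the $q'$-Morley sequence onto $y_b$ is automatically a $q$-Morley sequence and hence $M$-conjugate to $\{b_i\}$. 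With that adjustment the argument is exactly the standard one the paper cites.
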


This concludes the $\mathrm{NSOP}_{5}$ case. When $n \geq 2$, we may not be able to obtain the desired finitely satisfiable Morley sequence. However, unlike the case where $n =1$, we would not need anything stronger than an $\ind^{\eth^{n-1}}$-Morley sequence to show that a formula does not $n$-$\eth$-fork over $M$, as opposed to just $n$-$\eth$-dividing over $M$, because $n$-$\eth$-forking already coincides with $n$-$\eth$-dividing (Proposition \ref{5-fd}). Still, we do not show that $ \{c_{2i}\}_{-2^{n}\leq i \leq 2^{n}} \ind^{\eth^{n}}_{M} \{c_{2i-1}\}_{-2^{n} < i \leq 2^{n}}$ using an  explicit $\ind^{\eth^{n-1}}$-Morley sequence. Rather, let $m \leq 2^{n-1}$ be least such that $$\{c_{2i-1}\}_{-m < i \leq m}\nind^{\eth^{n}}_{M}\{c_{2i}\}_{-m\leq i \leq m}$$ Then $m > 0$. Let $\bar{a} = \{c_{2i}\}_{-m< i < m}$, $\bar{b}= \{c_{2i-1}\}_{-m < i \leq m}$, and $\bar{c}=c_{-2m} c_{2m}$. Then $\bar{b}\nind_{M}^{\eth^{n}}\bar{a}\bar{c}$, $\bar{a}\ind_{M}^{\eth^{n}}\bar{b}$ by minimality of $m$ and the fact that $\overline{a}\overline{b} \equiv_{M} \{c_{2i-1}\}_{-(m-1) < i \leq m-1}\{c_{2i}\}_{-(m-1) \leq i \leq m-1}$, and $\mathrm{tp}(ab/cM)$ is finitely satisfiable over $M$ by construction. To show asymmetry of $\ind_{M}^{\eth^{n}}$, it remains to show that $\bar{a}\bar{c}\ind_{M}^{\eth^{n}}\bar{b}$. We use the proof technique from Claim 6.2 of \cite{NSOP2}. Let $\varphi(\bar{x}, \bar{z}, \bar{b}) \in \mathrm{tp}(\bar{a}\bar{c}/M\bar{b})$; we show it does not $n$-$\eth$-fork over $M$, for which it suffices that it not $n$-$\eth$-divide over $M$. More explicitly, $\models\varphi(\bar{a}, \bar{c}, \bar{b})$, so $\models\varphi(\bar{a}, \bar{y}, \bar{b}) \in \mathrm{tp}(\bar{c}/M\bar{a}\bar{b})$. By finite satisfiability, there is some $\bar{m} \in M$ so that  $\models\varphi(\bar{a}, \bar{m}, \bar{b})$. So $\varphi(\bar{x}, \bar{m}, \bar{b}) \in \mathrm{tp}(\bar{a}/M\bar{b})$. Because $\bar{a}\ind_{M}^{\eth^{n}}\bar{b}$, there is then some $\ind^{\eth^{n-1}}$-Morley sequence $\{\bar{b}_{i}\}_{i < \omega}$ over $M$ with $\bar{b}_{0} =\bar{b}$ so that $\{\varphi(\bar{x}, m, \bar{b}_{i})\}_{i < \omega}$ is consistent. A fortiori, $\{\varphi(\bar{x}, \bar{z}, \bar{b}_{i})\}_{i < \omega}$ is consistent. So $\varphi(\bar{x}, \bar{z}, \bar{b})$ does not $n$-$\eth$-divide over $M$, and as $\varphi(\bar{x}, \bar{z}, \bar{b}) \in \mathrm{tp}(\bar{a}\bar{c}/M\bar{b})$ was arbitrary, $\bar{a}\bar{c}\ind_{M}^{\eth^{n}}\bar{b}$. This concludes the proof of Theorem \ref{5-symm}.

In all of the examples of the previous section, $\ind^{\eth^{n}}=\ind^{a}$ has the following properties:

Right transitivity: $a\ind_{M}^{\eth^{n}}M'$ and $a\ind_{M'}^{\eth^{n}}b$ implies $a\ind_{M}^{\eth^{n}}b$

Left transitivity:

$M'\ind_{M}^{\eth^{n}}b$ and $a\ind_{M'}^{\eth^{n}}b$ implies $a\ind_{M}^{\eth^{n}}b$ 

when $M \prec M'$ are models. We show:

\begin{theorem}\label{5-trans}
    Assume $\ind^{\eth^{n}}$ is left or right transitive for $n \geq 1$. Then $T$ is $\mathrm{NSOP}_{2^{n+1}+1}$. Thus $2^{n+1}+1$ is the least $k$ so that every theory where $\ind^{\eth^{n}}$ is right transitive is $\mathrm{NSOP}_{k}$, and similarly for left transitivity.
\end{theorem}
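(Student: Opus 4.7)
The plan is to adapt the configuration and arguments of Theorem \ref{5-symm} to produce explicit witnesses of failure of left and right transitivity from $\mathrm{SOP}_{2^{n+1}+1}$. Reusing Lemma \ref{5-cycledividing} and the minimality argument, I extract from an $\mathrm{SOP}_{2^{n+1}+1}$ witness a model $M$ and tuples $\bar{a}, \bar{b}, \bar{c}$ satisfying: (i) $\bar{b} \nind^{\eth^n}_{M} \bar{a}\bar{c}$; (ii) $\bar{a} \ind^{\eth^n}_{M} \bar{b}$, by minimality of $m$; (iii) $\bar{a}\bar{c} \ind^{\eth^n}_{M} \bar{b}$, established during the proof of Theorem \ref{5-symm} via finite satisfiability; (iv) $\mathrm{tp}(\bar{a}\bar{b}/M\bar{c})$ is finitely satisfiable over $M$. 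A useful preliminary, used throughout, is that $\ind^{u}$ implies $\ind^{\eth^k}$ for every $k \geq 0$: by induction on $k$ using Proposition \ref{5-fd}, since any $\ind^{u}$-Morley sequence is, by the induction hypothesis, an $\ind^{\eth^{k-1}}$-Morley sequence, and hence witnesses non-$k$-$\eth$-dividing.

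For right transitivity, I would use (iv) together with symmetry of $\ind^{u}$ to extract an $M\bar{a}\bar{b}$-indiscernible coheir sequence $\{\bar{c}_{i}\}_{i<\omega}$ over $M$ with $\bar{c}_{0} = \bar{c}$, and set $M' := \mathrm{dcl}_{\mathrm{Sk}}(M \cup \{\bar{c}_{i}\}_{i<\omega})$, so $M \prec M'$ and $\bar{c} \in M'$. The coheir property together with the preliminary then gives $\bar{b} \ind^{\eth^n}_{M} M'$. It remains to show $\bar{b} \ind^{\eth^n}_{M'} \bar{a}\bar{c}$, which is equivalent to $\bar{b} \ind^{\eth^n}_{M'} \bar{a}$ since $\bar{c} \in M'$. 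Given any $\varphi(\bar{y}, \bar{a}, \bar{e}) \in \mathrm{tp}(\bar{b}/M'\bar{a})$, write $\bar{e}$ as a Skolem term in $M$ and finitely many of the $\bar{c}_{i}$'s; finite satisfiability of the $\bar{c}_{i}$'s over $M\bar{a}\bar{b}$ lets one substitute $M$-parameters for the $\bar{c}_{i}$'s, yielding a formula in $\mathrm{tp}(\bar{b}/M\bar{a})$. An $\ind^{\eth^{n-1}}$-Morley sequence over $M$ witnessing non-dividing (drawn from the configuration via (iii) in the spirit of the final paragraph of the proof of Theorem \ref{5-symm}) is then transported to one over $M'$ by Ramsey and compactness, exploiting the $M\bar{a}\bar{b}$-indiscernibility of the $\bar{c}_{i}$'s. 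Right transitivity then yields $\bar{b} \ind^{\eth^n}_{M} M'\bar{a}$, so $\bar{b} \ind^{\eth^n}_{M} \bar{a}\bar{c}$ by monotonicity, contradicting (i).

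For left transitivity, the construction is dual: using (iv), extract an $M\bar{c}$-indiscernible coheir sequence $\{(\bar{a}_{i}, \bar{b}_{i})\}_{i<\omega}$ over $M$ with $(\bar{a}_{0}, \bar{b}_{0}) = (\bar{a}, \bar{b})$, and set $M' := \mathrm{dcl}_{\mathrm{Sk}}(M \cup \{(\bar{a}_{i}, \bar{b}_{i})\}_{i \geq 1})$. The coheir property together with the preliminary then gives $M' \ind^{\eth^n}_{M} \bar{a}\bar{c}$. An analogous formula-level argument, again exploiting indiscernibility and finite satisfiability of the coheir sequence and using (iii), establishes $\bar{b} \ind^{\eth^n}_{M'} \bar{a}\bar{c}$. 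Left transitivity then gives $\bar{b} \ind^{\eth^n}_{M} \bar{a}\bar{c}$, again contradicting (i).

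The main obstacle, in both cases, is the formula-by-formula verification of independence over the enlarged base $M'$, i.e. producing an $\ind^{\eth^{n-1}}$-Morley sequence over $M'$ from one over $M$ witnessing consistency of the relevant formula set. The bridge is the indiscernibility and finite satisfiability of the coheir sequence used to define $M'$, which permits transporting Morley structure from $M$-level to $M'$-level via Ramsey and compactness arguments. A subtlety is the directional mismatch between the available independences (ii), (iii) (which deliver Morley sequences of $\bar{b}$-copies, respectively $\bar{a}\bar{c}$-copies, over $M$) and what is needed over $M'$; this mismatch is resolved using the finite satisfiability of the added coheir sequence to absorb $M'$-parameters back into $M$.
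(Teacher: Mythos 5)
Your plan has several genuine gaps, and it differs substantially from the paper's proof, which builds an entirely new configuration rather than bootstrapping off the one used for Theorem~\ref{5-symm}. Concretely: first, condition (iv) is stated with the wrong direction of finite satisfiability. In the configuration, it is $\mathrm{tp}(\bar{c}/M\bar{a}\bar{b})$ that is finitely satisfiable over $M$ (this is what the symmetry proof actually uses when it replaces $\bar{c}$ by $\bar{m}\in M$); the statement $\bar{a}\bar{b}\ind^{u}_{M}\bar{c}$ is \emph{false}, since $\bar{a}\bar{b}$ sits strictly between the two components of $\bar{c}$ and no tuple from $M$ can. This breaks your left-transitivity extraction at the start, since producing a coheir sequence of $(\bar{a},\bar{b})$-copies indiscernible over $M\bar{c}$ needs exactly $\bar{a}\bar{b}\ind^{u}_{M}\bar{c}$. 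Second, in your right-transitivity argument the step $\bar{b}\ind^{\eth^{n}}_{M}M'$ is unsupported: the coheir property of the $\bar{c}_{i}$'s says each $\bar{c}_{i}$ is finitely satisfiable over $M$ given $\bar{a}\bar{b}\bar{c}_{<i}$, not that $\bar{b}$ is finitely satisfiable over $M\{\bar{c}_{i}\}$; and indeed $\bar{b}\ind^{u}_{M}M'$ fails for the same reason as above, already at the level of $\mathrm{tp}(\bar{b}/M\bar{c}_{0})$. Third, there is an unresolved directional mismatch in verifying $\bar{b}\ind^{\eth^{n}}_{M'}\bar{a}$: by Proposition~\ref{5-fd}, this requires exhibiting, formula by formula, an $\ind^{\eth^{n-1}}$-Morley sequence over $M'$ of \emph{$\bar{a}$-copies}; but (ii) and (iii) produce Morley sequences of \emph{$\bar{b}$-copies} over $M$. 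Absorbing $M'$-parameters back into $M$ via finite satisfiability changes the parameter set of the formula, not which tuple the Morley sequence ranges over, so it does not bridge this gap.

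The paper's proof does not attempt a single intermediate model. It restarts with a richer $\mathrm{SOP}_{2^{n+1}+1}$ configuration of order type $\mathbb{Z}+\mathbb{Z}\times\mathbb{Z}+\mathbb{Z}$ so that each $b_{i}$ (and each $a_{i}$) is itself a $\mathbb{Z}$-indexed tuple, and it constructs a whole chain $M_{0}\prec M_{1}\prec\cdots\prec M_{2^{n}+1}$ by adjoining one $b_{i}$ at a time (for right transitivity; dually with the $a_{i}$'s for left transitivity). The chain is essential: the indices of $a$ and $b$ interleave, so the key Claim about Skolemized indiscernible sequences --- which requires that no $I_{1}$-index lie between two $I_{2}$-indices --- can only be applied to one $b_{i}$-block at a time. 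That Claim is the real engine: it is a model-valued strengthening of Fact~\ref{5-coheirs}, producing a coheir Morley sequence of the models $e^{i}_{I_{2}}$ out of the cofinal $\mathbb{Z}$-structure of the index set (this is where ``$J$ has no greatest element'' is used). Iterating transitivity along the chain and using the final $a\ind^{\eth^{n}}_{M_{k}}b$ then contradicts $a\nind^{\eth^{n}}_{M_{0}}b$ from Lemma~\ref{5-cycledividing}. Your proposal identifies the right raw materials (coheir sequences, the preliminary $\ind^{u}\Rightarrow\ind^{\eth^{k}}$, the $\bar{a},\bar{b},\bar{c}$ data), but the single-model shortcut does not close, and the concrete independences you assert do not hold in the configuration you are working with.
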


The proof is easier, as to show, say, right transitivity fails, we  produce $a, b, M_{0} \prec M_{1} \prec \ldots \prec M_{k}$ so that $a\ind_{M_{i}}^{\eth^{n}}M_{i+1}$ for $0 \leq i \leq k$,  $a\ind_{M_{k}}^{\eth^{n}}b$, but $a\nind_{M_{0}}^{\eth^{n}}b$; the dependency $a\nind_{M_{0}}^{\eth^{n}}b$ will be a slight modification of the above to produce models, but the instances of independence, $a\ind_{M_{i}}^{\eth^{n}}M_{i+1}$, will arise directly from the construction, unlike in the proof for symmetry. Again, assume $R(x, y)$ gives an instance of $\mathrm{SOP}_{2^{n+1} + 1}$. Choose a Skolemization of $T$ and a $T^{\mathrm{Sk}}$-indiscernible sequence $\{c^{*}_{i}\}_{\mathbb{Z} + \mathbb{Z} \times \mathbb{Z} + \mathbb{Z}}$ so that $\models R(c_{i}^{*}, c_{j}^{*})$ for $i < j < \mathbb{Z} + \mathbb{Z} \times \mathbb{Z} + \mathbb{Z}$. Let $M_{0} = \mathrm{dcl}_{\mathrm{Sk}}(\{c^{*}_{i}\}_{i \in \mathbb{Z}}\cup \{c^{*}_{\mathbb{Z} + \mathbb{Z}\times\mathbb{Z} + i}\}_{i \in \mathbb{Z}} )$. For $i \in \mathbb{Z}\times \mathbb{Z}$, let $c'_{i} = c'_{\mathbb{Z} + i}$. Now $\mathbb{Z} \times \mathbb{Z}$ as a set of ordered pairs, ordered lexicographically, and define $c_{i}=\{c_{(i, j)}\}_{j \in \mathbb{Z}}$. Again, it follows from Lemma \ref{5-cycledividing} that 

$$\{c_{2i-1}\}_{-2^{n-1} < i \leq 2^{n-1}}\nind^{\eth^{n}}_{M_{0}}\{c_{2i}\}_{-2^{n-1}\leq i \leq 2^{n-1}}$$ To make the notation easier, let $a_{i} = c_{2(-2^{n-1}+1 +i)-1}$ for $0 \leq i < 2^{n}$; in other words, $a_{i}$ is the $i$th term of $\{c_{2i-1}\}_{-2^{n-1} < i \leq 2^{n-1}}$. Let $b_{i}=c_{2(-2^{n-1}+i)}$ for $0 \leq i \leq 2^{n}$. Let $a = \{a_{i}\}_{0 \leq i < 2^{n}}$ and $b = \{b_{i}\}_{0 \leq i \leq 2^{n}}$. Then $a \nind_{M_{0}}^{\eth^{n}} b$. For the right transitivity case, it suffices to find $M_{0} \prec M_{1} \prec \ldots \prec M_{k}$ so that $a\ind_{M_{i}}^{\eth^{n}}M_{i+1}$ for $0 \leq i \leq k=2^{n} + 1$,  $a\ind_{M_{k}}^{\eth^{n}}b$, despite having shown $a\nind_{M_{0}}^{\eth^{n}}b$. For $0 < i \leq 2^{n}+1$, let $M_{i} = \mathrm{dcl}_{\mathrm{Sk}}(\{b_{0} \ldots b_{i-1} \})$. We show that for $0 < i < 2^{n}+1$, $a\ind_{M_{i}}^{\eth^{n}}M_{i+1}$, and $a\ind_{M_{k}}^{\eth^{n}}b$. This follows directly from unwinding definitions and applying to $\{c_{i}^{*}\}_{i \in \mathbb{Z} + \mathbb{Z}\times \mathbb{Z} + \mathbb{Z}}$ the following claim:

\begin{claim}
    Let $\{e_{i}\}_{i \in I}$ be an indiscernible sequence in $T^{\mathrm{Sk}}$, where $I$ is a linear order. Let $J \subset I$ be a set with no greatest element. Let $I_{1}, I_{2} \subseteq I$ be such that every element of $I_{2}$ is above every element of $J$, and no element of $I_{1}$ is between any two elements of $I_{2}$. Let $e_{I_{2}} = \mathrm{dcl}_{\mathrm{Sk}}(\{e_{s}\}_{s \in J \cup I_{2}})$, $e_{J} = \mathrm{dcl}_{\mathrm{Sk}}(\{e_{s}\}_{s \in J})$, and let $e_{I_{1}} = \{e_{s}\}_{s \in I}$. Then $e_{I_{1}} \ind_{e_{J}}^{\eth^{n}} e_{I_{2}}$ (in $T$).
\end{claim}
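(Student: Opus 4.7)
The plan is induction on $n \geq 1$, with the same construction serving both the base case and the inductive step. I would extend $\{e_i\}_{i \in I}$ to a $T^{\mathrm{Sk}}$-indiscernible sequence $\{e_i\}_{i \in I'}$ indexed by a linear order $I' \supseteq I$ obtained by inserting, in the gap just below $I_2$ and above $J$, countably many isomorphic copies $I_2^1, I_2^2, \ldots$ of $I_2$, ordered within the new block so that, setting $I_2^0 := I_2$, one has $I_2^0 > I_2^1 > I_2^2 > \cdots$ in $I'$. Put $e_{I_2^k} := \mathrm{dcl}_{\mathrm{Sk}}(\{e_s\}_{s \in J \cup I_2^k})$, so $e_{I_2^0} = e_{I_2}$. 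Since the order type of $(J, I_2^{k_0}, \ldots, I_2^{k_m})$ is the same for every increasing $k_0 < \cdots < k_m$, the sequence $\{e_{I_2^k}\}_{k \geq 0}$ is $e_J$-indiscernible. The hypothesis that no element of $I_1$ lies strictly between two elements of $I_2$ forces every element of $I_1$ to be either $\leq \min I_2$ or $\geq \max I_2$; the inserted block sits strictly between these two regions, so the order type of $(I_1, I_2^{k_0}, \ldots, I_2^{k_m})$ is also fixed, and $\{e_{I_2^k}\}_k$ is $e_{I_1}$-indiscernible as well.

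For the base case $n = 1$, I would verify that $\mathrm{tp}(e_{I_2^k}/e_J e_{I_2^{<k}})$ is finitely satisfiable in $\{e_s\}_{s \in J}$, so that $\{e_{I_2^k}\}_k$ is a finitely satisfiable Morley sequence over $e_J$, where $e_{I_2^{<k}} := e_{I_2^0} \cdots e_{I_2^{k-1}}$. Given a Skolem term $f(e_{\bar v})$ representing an element of $e_{I_2^k}$ with $\bar v \subseteq J \cup I_2^k$ and any formula with parameters expressed as Skolem terms on $J \cup I_2^{<k}$, the $I_2^k$-indices among $\bar v$ may be replaced by indices from $J$ chosen above all $J$-indices appearing in the data (possible as $J$ has no greatest element) and automatically below all $I_2^{<k}$-indices (since $J$ is below $I_2^{<k}$ in $I'$); $T^{\mathrm{Sk}}$-indiscernibility then preserves the formula. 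Given any $\varphi(\bar x, e_{I_2}) \in \mathrm{tp}(e_{I_1}/e_J e_{I_2})$, the $e_{I_1}$-indiscernibility of $\{e_{I_2^k}\}_k$ makes $\{\varphi(\bar x, e_{I_2^k})\}_k$ consistent, and Fact \ref{5-coheirs} yields that $\varphi$ does not $1$-$\eth$-fork over $e_J$, giving $e_{I_1} \ind^{\eth^1}_{e_J} e_{I_2}$.

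For the inductive step $n \geq 2$, I would apply the claim at level $n - 1$ to show $\{e_{I_2^k}\}_k$ is an $\ind^{\eth^{n-1}}$-Morley sequence over $e_J$. For each $k \geq 1$, the claim applies within the ambient sequence $\{e_i\}_{i \in I'}$ with the same $J$, with the claim's ``$I_1$'' taken to be $I_2^k$ and the claim's ``$I_2$'' taken to be $I_2^{<k}$: the required hypotheses hold because $I_2^{<k}$ lies above $J$ in $I'$ and $I_2^k$ lies entirely below $I_2^{<k}$ in $I'$. The inductive hypothesis delivers $\{e_s\}_{s \in I_2^k} \ind^{\eth^{n-1}}_{e_J} e_{I_2^{<k}}$, which upgrades to $e_{I_2^k} \ind^{\eth^{n-1}}_{e_J} e_{I_2^{<k}}$ by monotonicity of $\ind^{\eth^{n-1}}$ together with the inclusion $e_{I_2^k} \subseteq \mathrm{dcl}(e_J, \{e_s\}_{s \in I_2^k})$. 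Combined with $e_{I_1}$-indiscernibility of $\{e_{I_2^k}\}_k$, this shows that no formula in $\mathrm{tp}(e_{I_1}/e_J e_{I_2})$ $n$-$\eth$-divides over $e_J$, and Proposition \ref{5-fd} lifts this to non-$n$-$\eth$-forking, giving $e_{I_1} \ind^{\eth^n}_{e_J} e_{I_2}$.

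The main obstacle I anticipate is the base case $n = 1$, where Proposition \ref{5-fd} is unavailable and $1$-$\eth$-forking cannot be reduced to dividing; one is forced to use Fact \ref{5-coheirs}, and therefore to produce a genuine finitely satisfiable (not merely $\ind^{\eth^0}$-) Morley sequence. Verifying the finite satisfiability of $\mathrm{tp}(e_{I_2^k}/e_J e_{I_2^{<k}})$ in $\{e_s\}_{s \in J}$ is where the geometric placement of $I_2^1, I_2^2, \ldots$ in the slot above $J$ but below $I_2$, combined with $J$ having no greatest element, is essential. The inductive step for $n \geq 2$ is, by comparison, a bookkeeping argument reducing higher levels of genericity to lower ones via the same construction.
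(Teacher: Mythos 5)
Your proof is correct and follows the same strategy as the paper: insert copies of $I_{2}$ into the gap above $J$ to build a coheir Morley sequence over $e_{J}$ that is indiscernible over $e_{I_{1}}$, then apply Fact \ref{5-coheirs}. You handle two points more carefully than the paper's compressed write-up: you order the inserted copies so the sequence descends toward $J$ (the paper's $I_{-} + \omega\times I_{2} + I_{+}$ reads as ascending, which would not give finite satisfiability of $\mathrm{tp}(e_{I_{2}}^{i}/e_{J}e_{I_{2}}^{<i})$ in $e_{J}$), and for $n\geq 2$ you explicitly verify that the sequence is a $\ind^{\eth^{n-1}}$-Morley sequence (via the inductive hypothesis and Proposition \ref{5-fd}) before concluding non-$n$-$\eth$-dividing, a step the paper leaves implicit since Fact \ref{5-coheirs} as stated covers only $1$-$\eth$-forking.
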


\begin{proof}
    We may assume that any element of $I_{2}$ between two elements of $I_{2}$ is itself in $I_{2}$. Let $I_{-}$ be the set of elements of $I$ below all of the elements of $I_{2}$, and $I_{+}$ the set of elements of $I$ above all of the elements of $I_{2}$. Extend $\{e_{i}\}_{i \in I}$ to $\{e_{i}\}_{i \in I_{-} + \omega \times I_{2} + I_{+}}$ (so that $e_{I_{2}} =\{e_{s}\}_{s \in J \cup \{0\}\times I_{2}}$) so that it is still indiscernible in $T^{\mathrm{Sk}}$. For $i < \omega$, let $e_{I_{2}}^{i}=\mathrm{dcl}_{\mathrm{Sk}}(\{e_{s}\}_{s \in J \cup \{i\}\times I_{2}})$. Then $\{e^{i}_{I_{2}}\}_{i < \omega}$ is a finitely satisfiable Morley sequence over $e_{J}$ with $e_{I_{2}}^{0} = e_{I_{2}}$, because every element of $I_{2}$ is above every element of $J$ and $J$ has no greatest element. Moreover, because no element of $I_{1}$ is in between any two elements of $I_{2}$, $\{e^{i}_{I_{2}}\}_{i < \omega}$ is indiscernible over $e_{J}e_{I_{1}}$. So $e_{I_{1}} \ind_{e_{J}}^{\eth^{n}} e_{I_{2}}$ by Fact \ref{5-coheirs}.
\end{proof}

This completes the case of right transitivity. For left transitivity, we must find $M_{0} = M^{0} \prec M^{1} \prec \ldots \prec M^{k}$ so that for $0 \leq i \leq 2^{n}$, $M^{i+1}\ind_{M^{i}}^{n^{\eth}}b$, and $a\ind_{M^{k}}^{\eth^{n}} b$, despite having shown $a\nind_{M_{0}}^{\eth^{n}}b$. For $0 < i \leq 2^{n}$, let $M_{i} = \mathrm{dcl}_{\mathrm{Sk}}(\{a_{0} \ldots a_{i-1} \})$. Then for $0 \leq i \leq 2^{n}$, $M^{i+1}\ind_{M^{i}}^{u}b$, and $a\ind_{M^{k}}^{u} b$. This completes the case of left transitivity and thus the proof of Theorem \ref{5-trans}.

We conclude by asking whether the converse holds, giving us a theory of independence for $\mathrm{NSOP}_{2^{n+1}+1}$ theories:

\begin{question}\label{5-openquestion}
Does $\mathrm{NSOP}_{2^{n+1}+1}$ imply symmetry of $\ind^{\eth^{n}}$? Does it imply transitivity of $\ind^{\eth^{n}}$?

\end{question}

\bibliographystyle{plain}
\bibliography{refs}

\end{document}